\newtheorem*{theoA}{Theorem A}
\newtheorem*{theoB}{Theorem B}
\newtheorem*{theoC}{Theorem C}
\newtheorem*{theoD}{Theorem D}
\newtheorem*{theoE}{Theorem E}
\newtheorem*{theoF}{Theorem F}
\newtheorem{theo}{Theorem}[section]
\newtheorem{lem}{Lemma}[section]
\newtheorem{rem}{Remark}[section]
\newtheorem{ques}{Question}[section]
\newtheorem{open problem}{Open problem}[section]
\newcommand{\pa}{\partial}
\newcommand{\ol}{\overline}
\newcommand{\be}{\begin{equation}}
\newcommand{\ee}{\end{equation}}
\newcommand{\bs}{\begin{small}}
\newcommand{\es}{\end{small}}
\newcommand{\beas}{\begin{eqnarray*}}
\newcommand{\eeas}{\end{eqnarray*}}
\newcommand{\bea}{\begin{eqnarray}}
\newcommand{\eea}{\end{eqnarray}}
\renewcommand{\epsilon}{\varepsilon}
\numberwithin{equation}{section}
\begin{document}
\title[Bohr type inequality]
{Bohr type inequality for certain integral operators and Fourier transform on shifted disks}
\author[V. Allu, R. Biswas and R. Mandal]{ Vasudevarao Allu$^{*1}$, Raju Biswas$^2$ and Rajib Mandal$^2$}
\date{}
\maketitle
\let\thefootnote\relax
\footnotetext{Corresponding Author: V. Allu$^*$\\\indent avrao@iitbbs.ac.in\\[2mm]\indent R. Biswas\\\indent rajubiswasjanu02@gmail.com\\[2mm]\indent R. Mandal\\\indent rajibmathresearch@gmail.com\\[2mm]
$^1$ Indian Institute of Technology Bhubaneswar, School of Basic Science, Bhubaneswar-752050, Odisha, India.\\
$^2$ Department of Mathematics, Raiganj University, Raiganj, West Bengal-733134, India.\\}

\footnotetext{2020 Mathematics Subject Classification: 30C45, 30C50, 40G05, 44A55.}
\footnotetext{Key words and phrases: Bounded analytic functions,  Bohr radius, Bohr-Rogosinski radius, Ces\'aro operator, Bernardi integral operator, discrete Fourier transform.}
\footnotetext{Type set by \AmS -\LaTeX}
\begin{abstract}
In this paper, we derive the sharp Bohr type inequality for the Ces\'aro operator, Bernardi integral operator, and discrete Fourier transform acting on the class of bounded analytic functions defined on shifted disks
\beas \Omega_{\gamma}=\left\{z\in\mathbb{C}:\left|z+\frac{\gamma}{1-\gamma}\right|<\frac{1}{1-\gamma}\right\}\quad\text{for}\quad\gamma\in[0,1).\eeas
\end{abstract}
\section{Introduction and Preliminaries}
Let $\mathbb{D}_{\rho}(a)=\{z\in\mathbb{C}:|z-a|<\rho\}$ be an open disk center at $a$ with radius $\rho$ and let $\mathbb{D} := \mathbb{D}_1(0)$ be the open unit disk in $\mathbb{C}$. 
Let $\Omega$ be a simply connected domain such that $\mathbb{D}\subseteq \Omega$ and $\mathcal{H}(\Omega)$ denotes the class of analytic functions on $\Omega$. Let $\mathcal{B}(\Omega)=\left\{f \in \mathcal{H}(\Omega): f(\Omega)\subseteq \ol{\mathbb{D}}\right\}$, and the Bohr radius (see \cite{10}) for the class $\mathcal{B}(\Omega)$ is defined to be the number $B_\Omega\in (0, 1)$ such that 
 \beas B_\Omega=\sup\left\{\rho\in (0,1):M_f(\rho)\leq 1\;\text{for} \;f(z)=\sum_{n=0}^{\infty}\alpha_nz^n\in\mathcal{B}(\Omega),z\in\mathbb{D}\right\},\eeas 
where $M_f(\rho)=\sum_{n=0}^{\infty}|\alpha_n|\rho^n$ is the majorant series associated with the analytic functions $f\in\mathcal{B}(\Omega)$ in $\mathbb{D}$. 
It is well known that $B_{\mathbb{D}} = 1/3$ if $\Omega=\mathbb{D}$. This is described in the following manner:
\begin{theoA} (The Classical Bohr Theorem) If $f\in\mathcal{B}(\mathbb{D})$, then $M_f (\rho) \leq 1$ for $0 \leq \rho \leq 1/3$. The number $1/3$ is best possible.
\end{theoA}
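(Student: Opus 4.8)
The plan is to prove the upper bound by pairing a sharp coefficient estimate with a geometric summation, and then to certify sharpness by an explicit extremal family. Throughout write $f(z)=\sum_{n=0}^{\infty}\alpha_nz^n\in\mathcal{B}(\mathbb{D})$ and set $a=|\alpha_0|\le 1$. The crux is the coefficient inequality $|\alpha_n|\le 1-|\alpha_0|^2$ for every $n\ge 1$, and the efficient route to it is to reduce every coefficient to the first. Fixing $n$ and averaging $f$ over the $n$-th roots of unity, that is forming $\frac1n\sum_{j=0}^{n-1}f(\zeta^jz)$ with $\zeta=e^{2\pi i/n}$, annihilates all Taylor coefficients except those in degrees divisible by $n$, producing a function of $z^n$; since $z\mapsto z^n$ maps $\mathbb{D}$ onto $\mathbb{D}$, this exhibits a $g\in\mathcal{B}(\mathbb{D})$ with $g(0)=\alpha_0$ and $g'(0)=\alpha_n$. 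The desired bound is then exactly $|g'(0)|\le 1-|g(0)|^2$, the Schwarz--Pick lemma at the origin, obtained by applying the Schwarz lemma to $\phi_{\alpha_0}\circ g$, where $\phi_{\alpha_0}(w)=(w-\alpha_0)/(1-\ol{\alpha_0}w)$ normalizes $\alpha_0$ to $0$. I expect this reduction to be the main conceptual step: the per-coefficient estimate is governed entirely by the $n=1$ case, and the naive Parseval bound $|\alpha_n|\le\sqrt{1-a^2}$ is too weak, so it is Schwarz--Pick rather than $\ell^2$-orthogonality that does the work.

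With the coefficient bound in hand the upper estimate follows by summing a geometric series. Since $|\alpha_n|\le 1-a^2$ for $n\ge 1$, for $0\le\rho<1$ one has
\beas M_f(\rho)\le a+(1-a^2)\sum_{n=1}^{\infty}\rho^n=a+(1-a^2)\frac{\rho}{1-\rho}.\eeas
For $a<1$ the inequality $M_f(\rho)\le 1$ is equivalent, after dividing by $1-a>0$, to $(1+a)\rho/(1-\rho)\le 1$; the left-hand side is increasing in $a$, so the worst case is $a\to 1^-$, which gives $2\rho\le 1-\rho$, that is $\rho\le 1/3$ (the degenerate case $a=1$ forces all higher coefficients to vanish and is immediate). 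This proves $M_f(\rho)\le 1$ for $0\le\rho\le 1/3$.

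For sharpness I would test the estimate on the automorphisms $\phi_a(z)=(a-z)/(1-az)$ with $a\in(0,1)$, whose Taylor coefficients are $\alpha_0=a$ and $\alpha_n=a^{n-1}(a^2-1)$ for $n\ge 1$, so that
\beas M_{\phi_a}(\rho)=a+(1-a^2)\frac{\rho}{1-a\rho}.\eeas
A short computation then yields $M_{\phi_a}(\rho)-1=(1-a)\big[(1+a)\rho/(1-a\rho)-1\big]$, which is positive precisely when $\rho>1/(1+2a)$. Letting $a\to 1^-$ sends $1/(1+2a)\to 1/3$, so for any $\rho>1/3$ a choice of $a$ sufficiently close to $1$ forces $M_{\phi_a}(\rho)>1$. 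Hence the radius $1/3$ cannot be enlarged, which completes the proof.
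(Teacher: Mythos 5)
Your proof is correct, and it is worth noting that the paper itself never proves Theorem A: it is quoted as a classical result (with references to Bohr, Wiener--Riesz--Schur, Sidon, Tomi\'c), and only the sharpness half is argued in the surrounding text, via the M\"obius functions $\varphi_r(z)=(r-z)/(1-rz)$ and the observation that $M_{\varphi_r}(\rho)>1$ exactly when $\rho>1/(1+2r)$ --- which is precisely your sharpness computation. Where a comparison is meaningful is with the scheme the paper applies to its own generalizations (Theorems 3.1--3.3 on $\Omega_\gamma$): there the authors use the coefficient bound $|a_n|\le(1-\gamma)^n(1-|a_0|^2)$ (Lemma 2.2, which at $\gamma=0$ is exactly your inequality $|\alpha_n|\le 1-|\alpha_0|^2$), then sum, study the resulting majorant $\varphi(a,\rho)$ by differentiating in $a$, and finish with the same M\"obius extremal family composed with an affine map. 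Your route differs in two respects. First, the paper imports the coefficient bound from Ruscheweyh's lemma (its Lemma 2.1, a derivative estimate at an arbitrary point of $\mathbb{D}$, cited as a black box), whereas you derive it from scratch by Wiener's root-of-unity averaging: $\frac1n\sum_{j}f(\zeta^jz)$ collapses to a function $g$ of $z^n$ with $g(0)=\alpha_0$, $g'(0)=\alpha_n$, and Schwarz--Pick at the origin finishes it. This makes your proof self-contained and is arguably the more elementary mechanism, though it only gives the estimate at the origin, while Ruscheweyh's lemma is what the paper needs anyway for its shifted-disk setting. Second, your optimization over $a=|\alpha_0|$ is algebraic (divide $1-M_f(\rho)\ge 0$ through by $1-a$ and note monotonicity in $a$), which is cleaner than the paper's second-derivative argument for $\varphi(a,\rho)$, and you correctly dispose of the degenerate case $a=1$ by maximum modulus. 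In short: correct, complete, same overall architecture as the paper's method, with a more elementary and self-contained proof of the key coefficient lemma.
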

Note that, the inequality $M_f (\rho) \leq 1$, where $f\in\mathcal{B}(\mathbb{D})$, fails to hold for $\rho > 1/3$.
For $\varphi_r(z) = (r - z)/(1 - rz)$, it is easy to see that $M_{\varphi_r} (\rho)>1$ if, and only if, $\rho>1/(1+2r)$. This shows that $1/3$ is optimal for $r\to1$.\\[2mm]
\indent \textrm{Theorem A} was actually obtained by H. Bohr in 1914 (see \cite{5}) for $\rho \leq 1/6$, but  later Weiner, Riesz, and Schur \cite{8a} have independently improved 
this result to a value of $1/3$. 
This result has also been established by Sidon \cite{28} and Tomi\'c \cite{30}. Over the past two decades, a considerable amount of research has been conducted on Bohr type 
inequalities. For an in-depth investigation on Bohr radius and Bohr inequality, see \cite{1,2,301,302,303,304,305,4,8,12,306,307,14,16,308,R1} and the references therein. 
Boas and Khavinson \cite{7} further developed the concept of the Bohr radius, especially for several complex variables, and introduced the concept of the multidimensional Bohr 
radius. We refer to \cite{3,V1,V2,21a} for an in-depth study on multidimensional Bohr radius.\\[2mm]
\indent For $\gamma\in[0,1)$, we consider the open disk $\Omega_{\gamma}$ defined by
\bea\label{a1} \Omega_{\gamma}=\left\{z\in\mathbb{C}:\left|z+\frac{\gamma}{1-\gamma}\right|<\frac{1}{1-\gamma}\right\}.\eea
Note that the unit disk $\mathbb{D}$ is always a subset of  $\Omega_{\gamma}$. 
Figure \ref{fig1} illustrates the pictures of the circles $C_\gamma : \left|z+\gamma/(1-\gamma)\right|=1/(1-\gamma)$  for different values of $\gamma \in [0, 1)$.
\begin{figure}[H]
\centering
\includegraphics[scale=0.8]{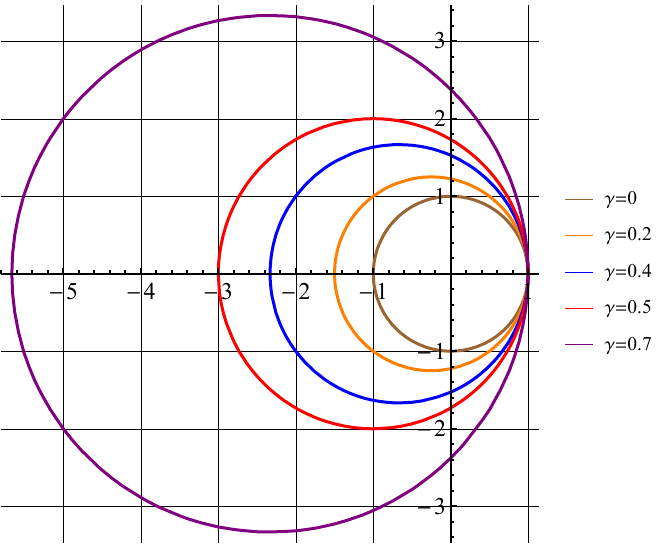}
\caption{The graphs of $C_{\gamma}$ when $\gamma=0,0.2,0.4,0.5,0.7$}
\label{fig1}
\end{figure}
Besides the Bohr radius, there is another concept known as the Rogosinski radius \cite{21,26}. It is described as follows:
Let $S_N(z):=\sum_{n=0}^{N-1}a_nz^n$ be the partial sum of $f\in\mathcal{B}(\mathbb{D})$ defined by $f(z)=\sum_{n=0}^\infty a_nz^n$. Then, $|S_N(z)|<1$ for all 
$N\geq 1$ in the disk $|z|<1/2$. Here $1/2$ is sharp, known as Rogosinski radius.
Kayumov and Ponnusamy \cite{13} have introduced the Bohr-Rogosinski sum $R_N^f(z)$, which is motivated by the Rogosinski radius.  It is defined as follows:
\beas R_N^f(z):=|f(z)|+\sum_{n=N}^{\infty}|a_n||z|^n.\eeas
It is evident that $|S_N(z)|=\left|f(z)-\sum_{n=N}^{\infty}a_nz^n\right|\leq R_N^f(z)$. 
Kayumov and Ponnusamy \cite{13} defined the Bohr-Rogosinski radius as the largest number $r\in(0,1)$ such that $R_N^f(z)\leq 1$ for $|z| < r$. 
We refer to \cite{12,15,18} and the references therein for an in-depth study of the Bohr-Rogosinski radius.\\[2mm]
The following question is expected in such a context.
\begin{ques} Is it possible to extend the Bohr type inequality for certain complex integral operators and integral transforms that are defined on different function spaces?
\end{ques}
The idea was initially proposed in the context of unit disk $\mathbb{D}$ for the classical Ces\'aro operator by Kayumov {\it et al.} \cite{17,18,19} and for the Bernardi integral operator by Kumar and Sahoo \cite{20}.
The objective of establishing the Bohr type and Bohr-Rogosinski type inequalities for the Bernardi integral operator, Ces\'aro operator, and its various generalizations on the family $\mathcal{B}(\mathbb{D})$ has been the subject of research. Additionally, extensive research has been conducted regarding the boundedness and compactness of the Ces\'aro operator in different function spaces. In the classical setting, for an analytic function $f(z)=\sum_{n=0}^\infty a_nz^n$ on the unit disk $\mathbb{D}$, the Ces\'aro operator \cite{8,11,29} is defined as
\bea \mathcal{C}f(z):=\sum_{n=0}^\infty \frac{1}{n+1}\left(\sum_{k=0}^n a_k\right) z^n=\int_{0}^1\frac{f(t z)}{1-t z} d t.\eea
It is not difficult to show that for $f\in\mathcal{B}(\mathbb{D})$,
\beas \vert\mathcal{C}f(z)\vert=\left\vert\sum_{n=0}^\infty \frac{1}{n+1}\left(\sum_{k=0}^n |a_k|\right) z^n\right\vert\leq \frac{1}{\rho}\log\frac{1}{1-\rho}\quad\text{for}\quad\vert z\vert=\rho.\eeas 
For the Ces\'aro operator, Kayumov {\it et al.} \cite{17} established the following Bohr type inequality.
\begin{theoB}\cite[Theorem 1, p. 616]{17} If $f\in\mathcal{B}(\mathbb{D})$ and $f(z)=\sum_{n=0}^\infty a_n z^n$, then 
\beas \mathcal{C}_f(\rho)=\sum_{n=0}^\infty \frac{1}{n+1}\left(\sum_{k=0}^n |a_k|\right) \rho^n\leq  \frac{1}{\rho}\log\frac{1}{1-\rho}\quad\text{for}\quad\vert z\vert=\rho\leq R,\eeas
where $R=0.5335...$ is the positive root of the equation 
\beas 2x=3(1-x)\log \frac{1}{1-x}.\eeas 
The number $R$ is the best possible.\end{theoB}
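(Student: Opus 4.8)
The plan is to reduce the estimate to a one–variable optimization driven by the classical coefficient bounds for bounded functions, and then to identify $R$ as the exact threshold of that optimization. First I would record the standard inequalities for $f\in\mathcal{B}(\mathbb{D})$ with $f(z)=\sum_{n=0}^\infty a_nz^n$: namely $a:=|a_0|\le 1$ and $|a_n|\le 1-a^2$ for every $n\ge 1$. These give the partial-sum estimate $\sum_{k=0}^n|a_k|\le a+n(1-a^2)$, valid for all $n\ge 0$, which I would substitute termwise into $\mathcal{C}_f(\rho)$ (legitimate since the weights $\rho^n/(n+1)$ are nonnegative).

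The next step is to evaluate the resulting majorant in closed form. Writing $\tfrac{n}{n+1}=1-\tfrac1{n+1}$ and invoking the two identities $\sum_{n=0}^\infty \frac{\rho^n}{n+1}=\frac1\rho\log\frac1{1-\rho}=:L$ and $\sum_{n=0}^\infty\rho^n=\frac1{1-\rho}=:S$, I would obtain $\mathcal{C}_f(\rho)\le aL+(1-a^2)(S-L)$. A short factorization gives $aL+(1-a^2)(S-L)-L=(1-a)\big[(1+a)(S-L)-L\big]$, and since $S-L=\sum_{n\ge 1}\frac{n}{n+1}\rho^n>0$ on $(0,1)$, the bracket is increasing in $a$; hence the worst case is $a\to 1$, where $\mathcal{C}_f(\rho)\le L$ collapses to $2(S-L)\le L$, i.e. $2S\le 3L$. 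Rearranging $2S\le 3L$ yields exactly $2\rho\le 3(1-\rho)\log\frac1{1-\rho}$. Setting $g(\rho)=3(1-\rho)\log\frac1{1-\rho}-2\rho$, a Taylor expansion gives $g(\rho)=\rho-\tfrac32\rho^2+\cdots>0$ near $0$ while $g(\rho)\to -2$ as $\rho\to 1^-$, so $g$ has a unique zero $R=0.5335\ldots$ in $(0,1)$ with $g\ge 0$ on $(0,R]$; this establishes the inequality for $\rho\le R$.

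For sharpness I would test the Möbius maps $\varphi_a(z)=(a-z)/(1-az)\in\mathcal{B}(\mathbb{D})$, whose coefficient moduli $|a_0|=a$ and $|a_n|=a^{n-1}(1-a^2)$ approach the extremal values $a$ and $1-a^2$ as $a\to 1^-$. Summing the geometric-type series gives the closed form $\mathcal{C}_{\varphi_a}(\rho)=(1+2a)L-(1+a)\frac{1}{a\rho}\log\frac{1}{1-a\rho}$. Setting $D(a)=\mathcal{C}_{\varphi_a}(\rho)-L$, one checks $D(1)=0$, and differentiating in $a$ yields $D'(1)=3L-2S$. Consequently, for every fixed $\rho>R$ one has $3L-2S<0$, so $D'(1)<0$; since $D(1)=0$, this forces $D(a)>0$ for $a$ slightly below $1$, i.e. $\mathcal{C}_{\varphi_a}(\rho)>L$. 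Hence $R$ cannot be enlarged.

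The main obstacle is the sharpness argument rather than the upper bound. The termwise estimate $|a_n|\le 1-a^2$ is simultaneously tight for all $n$ only in a limiting sense, so no single function attains equality; the crux is to verify that the Möbius family recovers the extremal profile quickly enough as $a\to 1^-$ that the sign of the first-order coefficient $D'(1)=3L-2S$ decides the question precisely at the root of $2\rho=3(1-\rho)\log\frac1{1-\rho}$. Care is also needed to confirm $S-L>0$ throughout $(0,1)$, so that the one-variable reduction genuinely locates its worst case at $a=1$.
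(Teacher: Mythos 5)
Your proposal is correct and takes essentially the same route as the paper's own argument (which proves the generalized Theorem 3.1 on $\Omega_\gamma$; Theorem B is its $\gamma=0$ case): the same Schwarz--Pick coefficient bounds $|a_n|\le 1-|a_0|^2$, the same majorant $aL+(1-a^2)(S-L)$ reduced to the worst case $a=1$, the same threshold equation $2S=3L$, and the same M\"obius-family sharpness analysis driven by first-order behavior at $a=1$. Your factorization $(1-a)\bigl[(1+a)(S-L)-L\bigr]$ and the derivative computation $D'(1)=3L-2S$ are only cosmetic variants of the paper's concavity-in-$a$ argument and its expansion $\mathcal{C}_{f_1}(\rho)=L+(1-a)(2S-3L)+O\bigl((1-a)^2\bigr)$.
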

For an analytic function $f(z)=\sum_{n=m}^\infty a_n z^n$ on the unit disk $\mathbb{D}$, the Bernardi integral operator \cite[p. 11]{21c} is defined as
\bea\label{aa1} \mathcal{L}_\beta f(z):=\frac{1+\beta}{z^\beta}\int_0^z f(\eta) \eta^{\beta-1} d \eta=(1+\beta)\sum_{n=m}^\infty \frac{a_n}{\beta+n} z^n, \eea
where $\beta>-m$ and $m\geq 0$ is an integer. From (\ref{aa1}), it is worth to mentioning that for $f\in\mathcal{B}(\mathbb{D})$ with $|z|=\rho\in[0,1)$, we have
\beas |\mathcal{L}_\beta f(z)|=\left|(1+\beta)\sum_{n=m}^\infty \frac{a_n}{\beta+n} z^n\right|\leq (1+\beta)\frac{\rho^m}{m+ \beta},\eeas
which is equivalent to the following inequality
\beas \left|\sum_{n=m}^\infty \frac{a_n}{\beta+n} z^n\right|\leq \frac{\rho^m}{m+ \beta}.\eeas
In 2021, Kumar and Sahoo \cite{20} established the following Bohr type inequality with regard to the Bernardi integral operator.
\begin{theoC}\cite{20} Let $\beta>-m$. If $f(z)=\sum_{n=m}^\infty a_n z^n\in\mathcal{B}(\mathbb{D})$, then 
\beas \sum_{n=m}^\infty \frac{|a_n|}{\beta+n} \rho^n\leq \frac{\rho^m}{m+ \beta}\quad\text{for}\quad |z|=\rho\leq R(\beta),\eeas
where $R(\beta)$ is the positive root of the equation 
\beas\frac{x^m}{m+\beta}-2\sum_{n=m+1}^\infty \frac{x^n}{n+\beta}=0.\eeas 
The number $R(\beta)$ cannot be improved.\end{theoC}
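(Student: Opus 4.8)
The plan is to combine a sharp coefficient bound for the shifted Taylor series of a bounded function with an elementary monotonicity analysis of the function defining $R(\beta)$; throughout, the hypothesis $\beta>-m$ ensures $\beta+n>0$ for all $n\geq m$, so every denominator below is positive. First I would record the coefficient estimates. Since $f(z)=\sum_{n=m}^\infty a_nz^n\in\mathcal{B}(\mathbb{D})$ vanishes to order $m$ at the origin, the function $\phi(z)=f(z)/z^m=\sum_{k=0}^\infty a_{m+k}z^k$ is analytic on $\mathbb{D}$, and applying the maximum modulus principle on the circles $|z|=r$ and letting $r\to 1^-$ gives $\phi\in\mathcal{B}(\mathbb{D})$ with $\phi(0)=a_m$. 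The classical coefficient inequality $|c_k|\leq 1-|c_0|^2$ $(k\geq 1)$, valid for every $\sum_{k\geq 0}c_kz^k\in\mathcal{B}(\mathbb{D})$, applied to $\phi$ then yields $|a_m|\leq 1$ and $|a_n|\leq 1-|a_m|^2$ for all $n\geq m+1$.

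Next, writing $a=|a_m|\in[0,1]$ and inserting these bounds into the target sum, I would estimate
\[
\sum_{n=m}^\infty \frac{|a_n|}{\beta+n}\rho^n \;\leq\; \frac{a\,\rho^m}{\beta+m}+(1-a^2)\sum_{n=m+1}^\infty \frac{\rho^n}{\beta+n}.
\]
Subtracting $\rho^m/(m+\beta)$ and factoring out $(1-a)$, the claim reduces to $(1+a)\sum_{n=m+1}^\infty \rho^n/(\beta+n)\leq \rho^m/(m+\beta)$; since $1+a\leq 2$, it suffices to prove $\Psi(\rho)\geq 0$, where
\[
\Psi(\rho):=\frac{\rho^m}{m+\beta}-2\sum_{n=m+1}^\infty \frac{\rho^n}{\beta+n}=\rho^m g(\rho),\qquad g(\rho):=\frac{1}{m+\beta}-2\sum_{k=1}^\infty \frac{\rho^k}{\beta+m+k}.
\]
Here $g(0)=1/(m+\beta)>0$, $g'(\rho)<0$ on $(0,1)$ so $g$ is strictly decreasing, and $g(\rho)\to-\infty$ as $\rho\to1^-$ since the tail series diverges. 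Hence $g$ has a unique zero $R(\beta)\in(0,1)$, coinciding with the positive root in the statement, and $\Psi(\rho)\geq 0$ exactly on $[0,R(\beta)]$; this proves the inequality.

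For sharpness I would test the extremal candidate $f_a(z)=z^m(a-z)/(1-az)$ with $a\in(0,1)$, whose coefficients satisfy $|a_m|=a$ and $|a_{m+k}|=(1-a^2)a^{k-1}$ for $k\geq 1$. A direct computation gives
\[
\sum_{n=m}^\infty \frac{|a_n|}{\beta+n}\rho^n-\frac{\rho^m}{m+\beta}=(1-a)\left[(1+a)\sum_{k=1}^\infty \frac{a^{k-1}\rho^{m+k}}{\beta+m+k}-\frac{\rho^m}{\beta+m}\right].
\]
As $a\to1^-$ the bracketed term converges to $2\sum_{n=m+1}^\infty \rho^n/(\beta+n)-\rho^m/(\beta+m)=-\Psi(\rho)$, which is strictly positive precisely when $\rho>R(\beta)$. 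Consequently, for any $\rho>R(\beta)$ one can choose $a$ close enough to $1$ that the Bernardi sum exceeds $\rho^m/(m+\beta)$, so $R(\beta)$ cannot be improved.

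I expect the main obstacle to be the sharpness direction: justifying passage to the limit $a\to1^-$ inside the infinite sum and verifying that the strict inequality survives for every $\rho>R(\beta)$, which needs the geometric factor $a^{k-1}$ to be controlled uniformly in $k$. By contrast, the reduction $f=z^m\phi$ together with the coefficient inequality and the monotonicity of $g$ are routine.
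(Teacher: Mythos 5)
Your proof is correct. One point of orientation: this paper does not actually prove Theorem C --- it is quoted from Kumar and Sahoo \cite{20} as background --- so the nearest argument to compare with is the paper's proof of its own Bernardi-operator analogue, Theorem \ref{Th2} (the case of series starting at $n=0$, on $\Omega_\gamma$). Your argument follows the same overall template as that proof: a Schwarz--Pick-type coefficient bound, reduction to the extremal value $a=1$, and sharpness via the M\"obius-type function $z^m(a-z)/(1-az)$ as $a\to1^-$. There are three genuine differences. First, you need and correctly supply the reduction $\phi=f/z^m$ (with the maximum-modulus limit $r\to1^-$) to obtain $|a_n|\le 1-|a_m|^2$ for $n\ge m+1$; the paper never needs this step since its theorems concern $m=0$. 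Second, where the paper establishes monotonicity in $a$ by computing $\partial^2\varphi_2/\partial a^2\le 0$ and comparing $\partial\varphi_2/\partial a$ with its value at $a=1$, you factor the deficit as $(1-a)\left[(1+a)S-\rho^m/(\beta+m)\right]$ and use $1+a\le 2$; this is more elementary, avoids calculus entirely, and yields the same threshold equation for $R(\beta)$. Third, for sharpness the paper expands the excess as $(1-a)(\,\cdot\,)+O((1-a)^2)$ and reads off the sign of the first-order term, while you pass to the limit $a\to1^-$ inside the bracketed sum; the interchange you flagged as the main obstacle is harmless, since for fixed $\rho<1$ the terms $a^{k-1}\rho^{m+k}/(\beta+m+k)$ are dominated by the summable sequence $\rho^{m+k}/(\beta+m+k)$ (and increase monotonically in $a$), so the bracket converges to $-\Psi(\rho)$, which is strictly positive for each fixed $\rho>R(\beta)$. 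The two sharpness arguments are equivalent in substance; yours trades the explicit $O((1-a)^2)$ bookkeeping for a dominated-convergence appeal.
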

In 2023, Allu and Ghosh \cite{5} obtained the following sharp Bohr type inequality for the Ces\'aro operator and Bernardi integral operator for functions in the class $\mathcal{B}(\Omega_{\gamma})$ with $f(z)=\sum_{n=0}^\infty a_n z^n$ in $\mathbb{D}$.
\begin{theoD}\cite{5}
For $0\leq \gamma<1$, let  $f\in\mathcal{B}(\Omega_{\gamma})$ with $f(z)=\sum_{n=0}^\infty a_n z^n$ in $\mathbb{D}$. Then, we have 
\beas \mathcal{C}_f(\rho)=\sum_{n=0}^\infty \frac{1}{n+1}\left(\sum_{k=0}^n |a_k|\right) \rho^n\leq \frac{1}{\rho}\log\frac{1}{1-\rho}\quad\text{for}\quad |z|=\rho\leq R_\gamma,\eeas
where $R_\gamma$ is the positive root of the equation 
\beas (3+\gamma)(1-x)\log\frac{1}{1-x}=2x.\eeas 
The number $R_\gamma$ is the best possible. \end{theoD}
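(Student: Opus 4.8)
The plan is to transplant the problem to the unit disk, where the enlarged domain $\Omega_\gamma$ manifests as improved coefficient bounds. The affine map $\phi(w)=\frac{w-\gamma}{1-\gamma}$ is a conformal bijection of $\mathbb{D}$ onto $\Omega_\gamma$ with $\phi(\gamma)=0$ and $\phi^{-1}(z)=(1-\gamma)z+\gamma$, so $g:=f\circ\phi\in\mathcal{B}(\mathbb{D})$. Expanding $g$ about the interior point $\gamma$ and matching with $f(z)=\sum_{n=0}^\infty a_nz^n$ gives $a_n=(1-\gamma)^nb_n$, where $g(w)=\sum_n b_n(w-\gamma)^n$ and $b_0=g(\gamma)=a_0$. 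Everything therefore reduces to controlling the Taylor coefficients of a function bounded by $1$ on $\mathbb{D}$, expanded about $\gamma\in[0,1)$.

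The key lemma I would prove is the sharp estimate $|a_n|\le\frac{1-|a_0|^2}{1+\gamma}$ for $n\ge 1$. To obtain it, post-compose with the automorphism $S(u)=\frac{u+\gamma}{1+\gamma u}$, so that $\tilde g:=g\circ S\in\mathcal{B}(\mathbb{D})$ satisfies $\tilde g(0)=a_0$ and, by the classical coefficient estimate for bounded functions, $\tilde g(u)=\sum_m e_mu^m$ with $|e_m|\le 1-|a_0|^2$ for $m\ge 1$. Inverting the Möbius substitution $v=\frac{(1-\gamma^2)u}{1+\gamma u}$, i.e. $u=\frac{v}{(1-\gamma^2)-\gamma v}$, and reading off the coefficient of $v^n$ yields
\[ a_n=\frac{1}{(1+\gamma)^n}\sum_{m=1}^n\binom{n-1}{m-1}\gamma^{n-m}e_m . \]
Taking moduli and collapsing the binomial sum via $\sum_{k=0}^{n-1}\binom{n-1}{k}\gamma^k=(1+\gamma)^{n-1}$ gives exactly $|a_n|\le\frac{1-|a_0|^2}{1+\gamma}$. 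I expect this lemma to be the main obstacle: the naive route through $F=\frac{f-a_0}{1-\bar a_0 f}$ introduces Möbius cross-terms that spoil the constant, whereas this transplant-plus-binomial identity is what makes the factor $1/(1+\gamma)$ emerge cleanly.

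With the lemma in hand the rest is bookkeeping. Interchanging the order of summation, $\mathcal{C}_f(\rho)=\sum_{k=0}^\infty|a_k|c_k$ with $c_k:=\sum_{m\ge k}\frac{\rho^m}{m+1}$, and $c_0=\frac{1}{\rho}\log\frac{1}{1-\rho}$ is precisely the target upper bound. Since $|a_0|\le 1$, it suffices to show $\sum_{k\ge 1}|a_k|c_k\le(1-|a_0|)c_0$; inserting the lemma and using $1+|a_0|\le 2$ reduces this to $\frac{2}{1+\gamma}\sum_{k\ge 1}c_k\le c_0$. A short computation gives $\sum_{k\ge 1}c_k=\frac{1}{1-\rho}-c_0$, so the condition becomes $\frac{2}{1-\rho}\le(3+\gamma)c_0$, i.e.
\[ 2\rho\le(3+\gamma)(1-\rho)\log\frac{1}{1-\rho}. \]
Writing $\Phi(\rho)=(3+\gamma)(1-\rho)\log\frac{1}{1-\rho}-2\rho$, one checks $\Phi(0)=0$, $\Phi'(0)=1+\gamma>0$, and $\Phi(\rho)\to-2$ as $\rho\to 1^-$, so $\Phi$ has a single positive zero $R_\gamma$ and is nonnegative on $(0,R_\gamma]$; this is exactly the asserted inequality for $\rho\le R_\gamma$, and setting $\gamma=0$ recovers Theorem B as a consistency check.

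For sharpness I would transplant the extremal Blaschke factor: for $a\in(\gamma,1)$ set $f_a(z)=g_a(\phi^{-1}(z))$ with $g_a(w)=\frac{a-w}{1-aw}$, so that $f_a\in\mathcal{B}(\Omega_\gamma)$ and a direct expansion yields $a_0=\frac{a-\gamma}{1-a\gamma}$ together with $|a_n|=\frac{(1-\gamma)(1-a^2)}{(1-a\gamma)^2}\,q^{\,n-1}$ for $n\ge 1$, where $q=\frac{a(1-\gamma)}{1-a\gamma}\in(0,1)$. Forming $\mathcal{C}_{f_a}(\rho)-c_0$ and factoring out the positive quantity $\frac{1-a}{1-a\gamma}$ leaves a bracket that, as $a\to 1^-$ (so $q\to 1$), tends to $\frac{2}{1-\rho}-(3+\gamma)c_0$. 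This limit is positive precisely when $\rho>R_\gamma$, so for any such $\rho$ there is an $a$ close to $1$ with $\mathcal{C}_{f_a}(\rho)>\frac{1}{\rho}\log\frac{1}{1-\rho}$, proving that $R_\gamma$ cannot be enlarged.
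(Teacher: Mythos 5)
Your proof is correct. Note first that this paper never proves Theorem D --- it only quotes it from \cite{5} --- so the fair comparison is with the template the paper uses for its own Theorems \ref{Th1}--\ref{Th3}, which is also the route of \cite{5}: transplant to the unit disk, a coefficient bound $|a_n|\le (1-|a_0|^2)/(1+\gamma)$ for $n\ge 1$, optimization over $a=|a_0|$, and sharpness via a Blaschke factor precomposed with the affine map (your $f_a$ is literally the paper's extremal function $f_1=\psi\circ\Phi_1$, only expanded at $0$ rather than at the center of $\Omega_\gamma$). Your one genuine deviation is how you reach the key lemma: the paper gets bounds of this type in one line from Ruscheweyh's inequality (its Lemma \ref{lem1}, applied at $\alpha=\gamma$, exactly as in the proof of Lemma \ref{lem2}), whereas you re-derive the needed special case by hand, recomposing with the automorphism $S(u)=(u+\gamma)/(1+\gamma u)$ and collapsing the binomial sum; your identity $a_n=(1+\gamma)^{-n}\sum_{m=1}^n\binom{n-1}{m-1}\gamma^{n-m}e_m$ and the resulting bound are correct, so this buys a self-contained argument at the cost of redoing a known lemma. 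Your remaining shortcuts are harmless and equivalent to the paper's steps: bounding $1+|a_0|\le 2$ replaces the paper's monotonicity-in-$a$ calculus (the $\partial\varphi/\partial a$ argument) and produces the identical threshold $2/(1-\rho)\le(3+\gamma)c_0$; and in the sharpness step, passing to the limit $a\to 1^-$ of the bracketed factor replaces the paper's $O((1-a)^2)$ bookkeeping, which is legitimate because the bracket depends continuously on $a$ and its limit $2/(1-\rho)-(3+\gamma)c_0$ is strictly positive precisely when $\rho>R_\gamma$. One point you should spell out: the claim that $\Phi(\rho)=(3+\gamma)(1-\rho)\log\frac{1}{1-\rho}-2\rho$ has a \emph{unique} positive zero, with $\Phi\ge 0$ before it and $\Phi<0$ after it (the latter is exactly what your sharpness argument uses), does not follow from $\Phi(0)=0$, $\Phi'(0)=1+\gamma>0$ and $\Phi(1^-)=-2$ alone; adding $\Phi''(\rho)=-(3+\gamma)/(1-\rho)<0$, so that $\Phi$ is strictly concave on $[0,1)$, closes this small gap.
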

\begin{theoE}\cite{5}
For $0\leq \gamma<1$, let  $f\in\mathcal{B}(\Omega_{\gamma})$ with $f(z)=\sum_{n=0}^\infty a_n z^n$ in $\mathbb{D}$. Then for $\beta>0$
\beas \sum_{n=0}^\infty \frac{|a_n|}{n+\beta}\rho^n\leq \frac{1}{\beta}\quad\text{for}\quad |z|=\rho\leq R_{\gamma,\beta},\eeas
where $R_{\gamma,\beta}$ is the positive root of the equation 
\beas\frac{1}{\beta}=\frac{2}{1+\gamma}\sum_{n=1}^\infty \frac{\rho^n}{n+\beta}.\eeas 
The number $R_{\gamma,\beta}$ is the best possible.
\end{theoE}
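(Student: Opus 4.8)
The argument rests on a sharp coefficient bound for $\mathcal{B}(\Omega_\gamma)$ together with an elementary optimisation. The plan is first to prove that for $f\in\mathcal{B}(\Omega_\gamma)$ with $f(z)=\sum_{n=0}^\infty a_nz^n$ in $\mathbb{D}$ one has
\beas |a_n|\leq\frac{1-|a_0|^2}{1+\gamma}\quad\text{for all }n\geq1,\eeas
the case $n=1$ being already sharp. This coefficient estimate is the heart of the matter and the expected main obstacle; the remainder of the proof is a routine consequence.

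To establish it, I would exploit the conformal map $\psi(w)=(w-\gamma)/(1-\gamma)$, which carries $\mathbb{D}$ onto $\Omega_\gamma$ with $\psi(\gamma)=0$ and $\psi^{-1}(z)=\gamma+(1-\gamma)z$. Post-composing $f$ with the disc automorphism $w\mapsto(w-a_0)/(1-\overline{a_0}w)$ gives $F:=(f-a_0)/(1-\overline{a_0}f)\in\mathcal{B}(\Omega_\gamma)$ with $F(0)=0$. Since $G:=F\circ\psi\in\mathcal{B}(\mathbb{D})$ vanishes at $\gamma$, the Schwarz lemma permits dividing out the Blaschke factor $(w-\gamma)/(1-\gamma w)$; substituting $w=\psi^{-1}(z)$ then yields
\beas F(z)=\frac{z}{(1+\gamma)-\gamma z}\,\widetilde{\omega}(z),\qquad\widetilde{\omega}\in\mathcal{B}(\Omega_\gamma).\eeas
For $n=1$ this gives $|a_1|=(1-|a_0|^2)\,|\widetilde{\omega}(0)|/(1+\gamma)\leq(1-|a_0|^2)/(1+\gamma)$ at once (equivalently, Schwarz--Pick for $g=f\circ\psi$ at $\gamma$). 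For $n\geq2$ I would expand $f=a_0+(1-|a_0|^2)\sum_{k\geq1}(-\overline{a_0})^{k-1}F^k$ and compare Taylor coefficients; since $\widetilde{\omega}$ again lies in $\mathcal{B}(\Omega_\gamma)$, its coefficients obey the same bound at all indices below $n$, so the estimate should propagate by strong induction on $n$. Showing that the resulting nonlinear combinations of these lower coefficients never exceed $(1-|a_0|^2)/(1+\gamma)$ is precisely where the geometry of $\Omega_\gamma$ (the factor $1+\gamma$) enters, and is the technical crux.

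Granting the estimate, set $a=|a_0|\in[0,1]$ and $S(\rho)=\sum_{n=1}^\infty\rho^n/(n+\beta)$. Then
\beas \sum_{n=0}^\infty\frac{|a_n|}{n+\beta}\rho^n\leq\frac{a}{\beta}+\frac{1-a^2}{1+\gamma}\,S(\rho)=:\phi(a),\eeas
and $\phi(a)-1/\beta=(1-a)\big[(1+a)S(\rho)/(1+\gamma)-1/\beta\big]$. The bracket increases in $a$, so $\phi(a)\leq1/\beta$ for every $a\in[0,1]$ if and only if it holds as $a\to1$, that is $\tfrac{2}{1+\gamma}S(\rho)\leq\tfrac1\beta$. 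As $S$ is continuous and strictly increasing on $(0,1)$ with $S(\rho)\to0$ as $\rho\to0^+$ and $S(\rho)\to\infty$ as $\rho\to1^-$, the equation $\tfrac1\beta=\tfrac{2}{1+\gamma}S(\rho)$ has a unique root $R_{\gamma,\beta}\in(0,1)$, and the asserted inequality holds exactly for $\rho\leq R_{\gamma,\beta}$.

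For sharpness I would test the family $f=g\circ\psi^{-1}$ with $g(w)=(a-w)/(1-aw)$, $a\in(0,1)$, i.e.
\beas f(z)=\frac{(a-\gamma)-(1-\gamma)z}{(1-a\gamma)-a(1-\gamma)z}\in\mathcal{B}(\Omega_\gamma).\eeas
A direct expansion gives $a_0=(a-\gamma)/(1-a\gamma)$ and $|a_n|=\tfrac{1-a_0^2}{1+\gamma}\,q^{\,n-1}$ with $q=a(1-\gamma)/(1-a\gamma)\in(0,1)$, so $a_1$ saturates the coefficient bound. Consequently
\beas \sum_{n=0}^\infty\frac{|a_n|}{n+\beta}\rho^n-\frac1\beta=(1-a_0)\Big[\frac{1+a_0}{1+\gamma}\sum_{n=1}^\infty\frac{q^{\,n-1}\rho^n}{n+\beta}-\frac1\beta\Big].\eeas
Fixing any $\rho_0\in(R_{\gamma,\beta},1)$ and letting $a\to1^-$ (whence $a_0\to1$, $q\to1$), the bracket tends to $\tfrac{2}{1+\gamma}S(\rho_0)-\tfrac1\beta>0$; since $1-a_0>0$, the left-hand side is strictly positive for $a$ near $1$, so the inequality fails at $\rho_0$. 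Hence $R_{\gamma,\beta}$ cannot be enlarged, which is the best-possible claim.
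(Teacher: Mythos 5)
Your skeleton --- a coefficient bound $|a_n|\le (1-|a_0|^2)/(1+\gamma)$ for $n\ge 1$, an optimization in $a=|a_0|$, and sharpness via $f=g\circ\psi^{-1}$ with $g(w)=(a-w)/(1-aw)$ --- is exactly the route of the cited source \cite{5}, from which the paper quotes this theorem, and it parallels the paper's own proof of Theorem \ref{Th2}. Your optimization step is correct, and the factorization $\phi(a)-\tfrac1\beta=(1-a)\bigl[(1+a)S(\rho)/(1+\gamma)-1/\beta\bigr]$ is in fact tidier than the second-derivative/monotonicity computation used in the paper's proofs; your sharpness computation also checks out (indeed $|a_n|=\tfrac{1-a_0^2}{1+\gamma}q^{n-1}$ with $q=a(1-\gamma)/(1-a\gamma)$, and the limiting argument as $a\to 1^-$ is sound).

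The genuine gap is the coefficient lemma itself, which you leave unproven for $n\ge 2$ and yourself flag as ``the technical crux.'' The strong induction you sketch --- expanding $f=a_0+(1-|a_0|^2)\sum_{k\ge1}(-\overline{a_0})^{k-1}F^k$ with $F(z)=\frac{z}{1+\gamma-\gamma z}\,\widetilde{\omega}(z)$ and comparing Taylor coefficients --- is never carried out, and it is far from clear that it closes in general: the $n$-th coefficient of $f$ is a signed combination of products of coefficients of $\widetilde{\omega}$ with those of the geometric expansion of $z/(1+\gamma-\gamma z)$, and one must show that triangle-inequality estimates of these nonlinear combinations never exceed the sharp constant (it works for small $n$, but no mechanism is given for arbitrary $n$). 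This is precisely the step a proof must supply, and there is a one-line substitute which is what the paper (and \cite{5}, going back to \cite{9}) actually uses: apply Lemma \ref{lem1} (Ruscheweyh \cite{26a}) to $g=f\circ\psi\in\mathcal{B}(\mathbb{D})$ at the point $\alpha=\gamma$ rather than at the origin. Since $f(z)=g(\gamma+(1-\gamma)z)$, one has $a_n=\frac{g^{(n)}(\gamma)}{n!}(1-\gamma)^n$ and $g(\gamma)=f(0)=a_0$, whence
\beas |a_n|\le\frac{(1-\gamma)^n\bigl(1-|a_0|^2\bigr)}{(1-\gamma)^{n-1}(1-\gamma^2)}=\frac{1-|a_0|^2}{1+\gamma}\quad\text{for all}\;\;n\ge1,\eeas
which is the estimate you need; note this is the same device by which the paper proves its Lemma \ref{lem2}, only evaluated at $\gamma$ instead of $0$. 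With this substitution for your inductive sketch, the rest of your argument is complete and coincides with the standard proof.
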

It is observed that the Bohr type inequality presented in \textrm{Theorems D} and \textrm{E} imposes a restriction on analytic functions defined on the simply connected domain 
$\Omega_{\gamma}$ to the open unit disk $\Bbb{D}$.\\[2mm]
\indent Let $\left\{x_n\right\}_{n=0}^{N-1}$ be a sequence of complex numbers. The discrete Fourier transform (see \cite[Chapter 6]{100}) is defined as 
\beas \mathcal{F}(x_n)=\sum_{n=0}^{N-1} x_n e^{-2\pi i n k/N}. \eeas 
For $f(z)=\sum_{n=0}^\infty a_n z^n\in\mathcal{B}(\mathbb{D})$, we perform the discrete Fourier transform on the coefficients $a_k$ from $k=0$ to $n$, which gives
\beas \mathcal{F}[f](z)=\sum_{n=0}^\infty \left(\sum_{k=0}^n a_k e^{-2\pi i n k/(n+1)}\right) z^n.\eeas
To obtain a Bohr type inequality in the unit disk $\Bbb{D}$, we denote the majorant series of $\mathcal{F}[f](z)$ as
\beas \mathcal{F}_f(\rho):=\sum_{n=0}^\infty \left(\sum_{k=0}^n |a_k|\right) \rho^n, \quad\text{where}\quad |z|=\rho<1.\eeas
For functions in the class $\mathcal{B}(\mathbb{D})$, in 2024, Ong and Ng \cite{101} derived the following Bohr type inequality with regard to the discrete Fourier transform.
\begin{theoF}\cite{101}
Let $f(z)=\sum_{n=0}^\infty a_n z^n\in\mathcal{B}(\mathbb{D})$. Then, 
\beas \mathcal{F}_f(\rho)\leq \frac{1}{1-\rho}\quad\text{for}\quad \rho\leq \frac{1}{3}.\eeas
The constant $1/3$ cannot be improved.
\end{theoF}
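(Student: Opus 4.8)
The plan is to show that the seemingly more elaborate quantity $\mathcal{F}_f(\rho)$ is, after a single reindexing, nothing but a constant multiple of the classical Bohr majorant series, so that the whole statement collapses onto Theorem A. First I would interchange the order of summation in the defining double sum. Since every term is nonnegative and $\rho\in[0,1)$ guarantees absolute convergence, the reordering is legitimate; grouping by the inner index $k$ instead of the outer index $n$, and noting that $|a_k|$ appears exactly in those terms with $n\geq k$, gives
\beas \mathcal{F}_f(\rho) &=& \sum_{k=0}^\infty |a_k|\sum_{n=k}^\infty \rho^n = \frac{1}{1-\rho}\sum_{k=0}^\infty |a_k|\rho^k = \frac{M_f(\rho)}{1-\rho}, \eeas
where $M_f(\rho)$ denotes the usual majorant series of $f$.

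With this identity in hand, the target inequality $\mathcal{F}_f(\rho)\leq 1/(1-\rho)$ becomes simply $M_f(\rho)\leq 1$. Because $f\in\mathcal{B}(\mathbb{D})$, the Classical Bohr Theorem (Theorem A) delivers precisely $M_f(\rho)\leq 1$ for all $\rho\leq 1/3$, which finishes the proof of the bound.

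For sharpness I would reuse the extremal functions of the classical theorem. Recalling from the introduction that the M\"obius maps $\varphi_r(z)=(r-z)/(1-rz)\in\mathcal{B}(\mathbb{D})$ satisfy $M_{\varphi_r}(\rho)>1$ if and only if $\rho>1/(1+2r)$, for any fixed $\rho>1/3$ one has $(1-\rho)/(2\rho)<1$, so I may choose $r$ close enough to $1$ that $\rho>1/(1+2r)$; then $\mathcal{F}_{\varphi_r}(\rho)=M_{\varphi_r}(\rho)/(1-\rho)>1/(1-\rho)$, showing the constant $1/3$ cannot be enlarged. I do not anticipate a genuine obstacle: the entire content of the theorem is carried by the summation interchange, after which everything reduces to Theorem A. The only point demanding any care is the routine justification of that reordering, which is immediate from nonnegativity of the terms and $\rho<1$.
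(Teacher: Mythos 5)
Your proof is correct, and it takes a genuinely different route from the paper. You exploit the identity
\beas \mathcal{F}_f(\rho)=\sum_{n=0}^\infty \left(\sum_{k=0}^n |a_k|\right)\rho^n=\sum_{k=0}^\infty |a_k|\sum_{n=k}^\infty \rho^n=\frac{M_f(\rho)}{1-\rho},\eeas
(legitimate by nonnegativity of the terms), which collapses both the bound and the sharpness onto the classical Bohr theorem: the inequality becomes $M_f(\rho)\leq 1$, and failure for any $\rho>1/3$ is witnessed by $\varphi_r(z)=(r-z)/(1-rz)$ with $r>(1-\rho)/(2\rho)$, exactly as recorded in the paper's introduction. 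The paper instead proves this statement as the $\gamma=0$ case of its Theorem \ref{Th3}: it invokes the coefficient bound of Lemma \ref{lem2} ($|a_n|\leq (1-\gamma)^n(1-|a_0|^2)$), majorizes the sum by $\varphi_3(a,\rho)=a/(1-\rho)+(1-a^2)\rho/(1-\rho)^2$, shows by differentiating in $a$ that the maximum occurs at $a=1$ precisely when $\rho\leq 1/3$, and establishes sharpness by an explicit computation with the composed M\"obius function $f_3=\psi\circ\Phi_3$ and the auxiliary quantity $G_3(a,\rho)$. What your reduction buys is brevity and a conceptual point the paper never states explicitly: the discrete Fourier majorant is nothing but the Bohr majorant multiplied by the Cauchy factor $1/(1-\rho)$, so Theorem F carries no information beyond Theorem A. What the paper's longer argument buys is generality: on the shifted disks $\Omega_\gamma$ the inner weights $(1-\gamma)^{-n}$ depend on the outer summation index, so your interchange produces $\widetilde{M}(\rho/(1-\gamma))\big/\left(1-\rho/(1-\gamma)\right)$ rather than $M_f(\rho)/(1-\rho)$, and the clean reduction to the classical Bohr theorem no longer applies; the paper's coefficient-bound-plus-calculus scheme goes through uniformly in $\gamma$.
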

\section{Main results}
Motivated by Fournier and Ruscheweyh \cite{10}, in this paper, we define the Bohr radius for functions in the class $\mathcal{B}(\Omega_{\gamma})$. It is defined to be the 
number 
$R_{\Omega_{\gamma}}\in (0, 1)$ such that 
\bs\beas R_{\Omega_{\gamma}}=\sup\left\{\rho\in (0,1):M_f(\rho)\leq 1\;\text{for} \;f(z)=\sum_{n=0}^{\infty}\alpha_n\left(z+\frac{\gamma}{1-\gamma}\right)^n\in\mathcal{B}(\Omega_{\gamma}), z\in\Omega_{\gamma}\right\},\eeas \es
where $M_f(\rho)=\sum_{n=0}^{\infty}|\alpha_n|\left(\rho/(1-\gamma)\right)^n$ with $|\gamma+(1-\gamma)z|=\rho$, is the majorant series associated with the analytic function $f\in\mathcal{B}(\Omega_{\gamma})$. It is well known that $R_{\mathbb{D}} = 1/3$ if $\Omega_{\gamma}=\mathbb{D}$.\\[2mm]
\indent 
The objective of this paper is to derive sharply Bohr type inequality for the Ces\'aro operator, Bernardi integral operator, and discrete Fourier transform for functions in the class 
$\mathcal{B}(\Omega_{\gamma})$ without imposing the restriction to the unit disk $\Bbb{D}$.\\[2mm]
\indent The following are key lemmas of this paper and will be used for the proof of the main results.
\begin{lem}\cite{26a}\label{lem1} For $f\in\mathcal{B}(\mathbb{D})$, then we have 
\beas \frac{\left|f^{(n)}(\alpha)\right|}{n!}\leq \frac{1-|f(\alpha)|^2}{(1-|\alpha|)^{n-1}(1-|\alpha|^2)}\;\;\text{for each}\;\; n\geq 1\;\;\text{and}\;\; \alpha\in\mathbb{D}.\eeas\end{lem}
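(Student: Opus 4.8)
The plan is to reduce the estimate at an arbitrary point $\alpha\in\mathbb{D}$ to a coefficient estimate at the origin by precomposing $f$ with a disk automorphism. First I would set $\phi_\alpha(z)=(\alpha+z)/(1+\overline{\alpha}z)$, which maps $\mathbb{D}$ onto itself with $\phi_\alpha(0)=\alpha$, and put $g:=f\circ\phi_\alpha\in\mathcal{B}(\mathbb{D})$, writing its Taylor expansion as $g(z)=\sum_{k=0}^{\infty}b_kz^k$. Then $b_0=g(0)=f(\alpha)$, and the task becomes to estimate the higher Taylor coefficients of $f$ at $\alpha$ in terms of the $b_k$.

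The key ingredient, and what I expect to be the real heart of the matter, is the sharp coefficient inequality $|b_k|\leq 1-|b_0|^2$ for every $k\geq 1$. I would prove this by isolating the $k$-th coefficient through a root-of-unity average: with $\omega=e^{2\pi i/k}$, the function $\frac{1}{k}\sum_{j=0}^{k-1}g(\omega^j z)=\sum_{m\geq 0}b_{km}z^{km}$ is again bounded by $1$, and viewed as a function $\Phi$ of $w=z^k$ it lies in $\mathcal{B}(\mathbb{D})$ with $\Phi(0)=b_0$ and $\Phi'(0)=b_k$. The classical Schwarz--Pick inequality evaluated at the origin, namely $|\Phi'(0)|\leq 1-|\Phi(0)|^2$ (itself a one-line consequence of the Schwarz lemma applied to a M\"obius conjugate of $\Phi$), then yields $|b_k|\leq 1-|b_0|^2=1-|f(\alpha)|^2$.

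Next I would transport this information back to $f$. Since $f(w)=g(\zeta)$ with $\zeta=\phi_\alpha^{-1}(w)=(w-\alpha)/(1-\overline{\alpha}w)$, and $\zeta$ vanishes to first order at $w=\alpha$, expanding $g(\zeta)=\sum_k b_k\zeta^k$ in powers of $u:=w-\alpha$ shows that only the indices $1\leq k\leq n$ contribute to the coefficient of $u^n$, so that $f^{(n)}(\alpha)/n!=\sum_{k=1}^{n}b_k\,[u^n]\zeta^k$. Writing $\zeta=u/\bigl((1-|\alpha|^2)-\overline{\alpha}u\bigr)$ and expanding the resulting geometric series, a direct computation gives $[u^n]\zeta^k=\binom{n-1}{k-1}\overline{\alpha}^{\,n-k}/(1-|\alpha|^2)^n$.

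Finally, combining these with the triangle inequality and the bound $|b_k|\leq 1-|f(\alpha)|^2$ reduces everything to the elementary binomial sum $\sum_{k=1}^{n}\binom{n-1}{k-1}|\alpha|^{\,n-k}=(1+|\alpha|)^{n-1}$, after which the identity $(1-|\alpha|^2)^n=(1-|\alpha|)^{n-1}(1-|\alpha|^2)(1+|\alpha|)^{n-1}$ collapses the estimate to the asserted form. The only genuinely delicate point is the sharp coefficient inequality of the second step; the reduction via $\phi_\alpha$, the power-series expansion, and the final binomial identity are all routine once that estimate is in hand.
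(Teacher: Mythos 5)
The paper never proves this lemma: it is imported verbatim from Ruscheweyh \cite{26a}, so there is no internal argument to compare yours against, and I can only judge your proposal on its own terms. It is correct, and it is in fact essentially Ruscheweyh's original argument. All the key steps check out: the Wiener-type bound $|b_k|\le 1-|b_0|^2$ for $g=f\circ\phi_\alpha$ is correctly proved by your root-of-unity averaging (the average $\frac{1}{k}\sum_{j=0}^{k-1}g(\omega^jz)$ retains only the powers $z^{km}$, hence equals $\Phi(z^k)$ for some $\Phi\in\mathcal{B}(\mathbb{D})$ with $\Phi(0)=b_0$, $\Phi'(0)=b_k$, and Schwarz--Pick at the origin applies); the coefficient extraction $[u^n]\zeta^k=\binom{n-1}{k-1}\overline{\alpha}^{\,n-k}/(1-|\alpha|^2)^n$ follows from the negative-binomial expansion of $\bigl((1-|\alpha|^2)-\overline{\alpha}u\bigr)^{-k}$; and the final collapse via $\sum_{k=1}^{n}\binom{n-1}{k-1}|\alpha|^{n-k}=(1+|\alpha|)^{n-1}$ and $(1-|\alpha|^2)^n=(1-|\alpha|)^{n-1}(1-|\alpha|^2)(1+|\alpha|)^{n-1}$ gives exactly the stated bound. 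Two routine points are left implicit and deserve a line in a written version: the rearrangement of $\sum_k b_k\zeta(u)^k$ into a power series in $u$ is justified by absolute convergence for small $|u|$ (the coefficients of $\zeta^k$ are dominated by those of $|u|^k\bigl((1-|\alpha|^2)-|\alpha||u|\bigr)^{-k}$), and since $\mathcal{B}(\mathbb{D})$ here allows $|f|\le 1$ rather than $|f|<1$, the case where $|f|$ attains $1$ forces $f$ to be constant by the maximum principle, in which case the inequality is trivial.
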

\begin{lem}\label{lem2}Let $f$ be analytic in $\Omega_{\gamma}$, bounded by $1$ with the series expansion $f(z)=\sum_{n=0}^\infty a_n\left(z+\frac{\gamma}{1-\gamma}\right)^n$ in $\Omega_{\gamma}$. Then $|a_n|\leq (1-\gamma)^n(1-|a_0|^2)$ for $n\geq 1$.\end{lem}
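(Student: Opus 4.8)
The plan is to reduce the statement on the shifted disk $\Omega_{\gamma}$ to the classical unit disk and then invoke Lemma \ref{lem1}. Since $\Omega_{\gamma}$ is the disk centered at $-\gamma/(1-\gamma)$ of radius $1/(1-\gamma)$, the affine map $\psi(\zeta)=(\zeta-\gamma)/(1-\gamma)$ is a conformal bijection of $\mathbb{D}$ onto $\Omega_{\gamma}$ that sends the origin to the center $-\gamma/(1-\gamma)$; indeed $\psi(\zeta)+\gamma/(1-\gamma)=\zeta/(1-\gamma)$, so $|\psi(\zeta)+\gamma/(1-\gamma)|=|\zeta|/(1-\gamma)<1/(1-\gamma)$. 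First I would set $g:=f\circ\psi$, so that $g$ is analytic on $\mathbb{D}$ and, since $f$ is bounded by $1$ on $\Omega_{\gamma}$, we have $g\in\mathcal{B}(\mathbb{D})$. The point is that the center of $\Omega_{\gamma}$, about which the given series for $f$ is expanded, is precisely the image of $\zeta=0$, which is where Lemma \ref{lem1} is sharpest.

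Next I would transport the coefficients through $\psi$. Substituting $\psi(\zeta)+\gamma/(1-\gamma)=\zeta/(1-\gamma)$ into the expansion $f(z)=\sum_{n=0}^\infty a_n\left(z+\gamma/(1-\gamma)\right)^n$ gives the Taylor expansion of $g$ at the origin,
\[
g(\zeta)=\sum_{n=0}^\infty a_n\left(\frac{\zeta}{1-\gamma}\right)^n=\sum_{n=0}^\infty \frac{a_n}{(1-\gamma)^n}\,\zeta^n,
\]
so that $g(0)=a_0$ and $g^{(n)}(0)/n!=a_n/(1-\gamma)^n$ for every $n\geq 1$. Applying Lemma \ref{lem1} to $g\in\mathcal{B}(\mathbb{D})$ at the point $\alpha=0$, where the denominator $(1-|\alpha|)^{n-1}(1-|\alpha|^2)$ collapses to $1$, yields $|g^{(n)}(0)|/n!\leq 1-|g(0)|^2=1-|a_0|^2$. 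Multiplying by $(1-\gamma)^n$ gives exactly $|a_n|=(1-\gamma)^n\,|g^{(n)}(0)|/n!\leq (1-\gamma)^n(1-|a_0|^2)$, which is the desired estimate.

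I do not anticipate a genuine obstacle here; the only points requiring a little care are verifying that $\psi$ maps $\mathbb{D}$ onto $\Omega_{\gamma}$ (so that $g$ really lies in $\mathcal{B}(\mathbb{D})$) and matching the two series correctly so that $g(0)=a_0$ rather than some rescaled quantity. The essential idea is simply that evaluating Lemma \ref{lem1} at the center of $\Omega_{\gamma}$, i.e.\ at $\zeta=0$, removes the $(1-|\alpha|)$ factors and converts the Schwarz--Pick derivative bound into the clean coefficient inequality, with the factor $(1-\gamma)^n$ appearing purely from the radius $1/(1-\gamma)$ of the shifted disk.
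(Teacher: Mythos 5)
Your proposal is correct and is essentially the paper's own proof: the paper also composes $f$ with the affine map $\Phi(\zeta)=(\zeta-\gamma)/(1-\gamma)$ from $\mathbb{D}$ onto $\Omega_{\gamma}$, identifies $g(0)=a_0$ and $a_n=(1-\gamma)^n g^{(n)}(0)/n!$, and applies Lemma \ref{lem1} at $\alpha=0$. The only difference is cosmetic: you verify explicitly that $\Phi$ maps $\mathbb{D}$ into $\Omega_{\gamma}$, which the paper leaves implicit.
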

\begin{proof} Let $\Phi : \mathbb{D}\to\Omega_{\gamma}$ be a function defined by $\Phi(z)=(z-\gamma)/(1-\gamma)$. Since $f : \Omega_{\gamma}\to\ol{\mathbb{D}}$, 
the composition $g=f\circ \Phi$ is analytic in $\mathbb{D}$. Thus 
\beas g(z)=f\left(\frac{z-\gamma}{1-\gamma}\right)=\sum_{n=0}^\infty \frac{a_n}{(1-\gamma)^n}z^n\eeas
with
\beas g(0)=a_0\quad\text{and}\quad a_n=\frac{g^{(n)}(0)}{n!}(1-\gamma)^n.\eeas
In view of \textrm{Lemma \ref{lem1}}, we have 
\beas|a_n|\leq (1-\gamma)^n\left(1-|g(0)|^2\right)=(1-\gamma)^n\left(1-|a_0|^2\right).\eeas\end{proof}
In the following, we derive a sharp Bohr type inequality for the Ces\'aro operator applied to functions belonging to the class $\mathcal{B}(\Omega_{\gamma})$ of bounded analytic functions.
\begin{theo}\label{Th1}
For $0\leq \gamma<1$, let  $f\in\mathcal{B}(\Omega_{\gamma})$ with $f(z)=\sum_{n=0}^\infty a_n\left(z+\frac{\gamma}{1-\gamma}\right)^n$ in $\Omega_{\gamma}$. Then, we have 
\beas \mathcal{C}_f(\rho)=\sum_{n=0}^\infty \frac{1}{n+1}\left(\sum_{k=0}^n\frac{|a_k|}{(1-\gamma)^k}\right) \rho^n\leq \frac{1}{\rho}\log\frac{1}{1-\rho}\;\;\text{for}\;|\gamma+(1-\gamma)z|=\rho\leq \rho_0,\eeas
where $\rho_0(\approx 0.533589)$ is the positive root of the equation $3(1-\rho)\log(1-\rho)+2\rho=0$. The number $\rho_0$ is the best possible.
\end{theo}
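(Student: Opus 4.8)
The plan is to reduce everything to a one–variable estimate via Lemma \ref{lem2} and then analyse the resulting function of $|a_0|$. First I would set $a:=|a_0|\le 1$ and $b_k:=|a_k|/(1-\gamma)^k$, so that $\mathcal{C}_f(\rho)=\sum_{n=0}^\infty \frac{1}{n+1}\bigl(\sum_{k=0}^n b_k\bigr)\rho^n$ with $b_0=a$. Lemma \ref{lem2} gives $b_k\le 1-a^2$ for every $k\ge 1$, whence the inner sum obeys $\sum_{k=0}^n b_k\le a+n(1-a^2)$. Substituting this bound and splitting along $\frac{n}{n+1}=1-\frac{1}{n+1}$, the two resulting power series sum in closed form to $L(\rho):=\frac{1}{\rho}\log\frac{1}{1-\rho}$ and $S(\rho):=\frac{1}{1-\rho}$, giving
\beas \mathcal{C}_f(\rho)\le a\,L(\rho)+(1-a^2)\bigl(S(\rho)-L(\rho)\bigr)=:g(a).\eeas

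The core of the argument is then the elementary maximisation of $g$ over $a\in[0,1]$. Since $S(\rho)>L(\rho)$ for $\rho\in(0,1)$, the map $g$ is a downward parabola in $a$ with critical point $a^*=L(\rho)/\bigl(2(S(\rho)-L(\rho))\bigr)$; I would show that $g(a)\le L(\rho)$ holds for all $a\in[0,1]$ precisely when $a^*\ge 1$, i.e. when $3L(\rho)\ge 2S(\rho)$. (When $a^*<1$ the interior maximum equals $L(\rho)+\bigl(2(S-L)-L\bigr)^2/\bigl(4(S-L)\bigr)>L(\rho)$, so the restriction is genuinely needed.) Writing out $3L(\rho)\ge 2S(\rho)$ as $3(1-\rho)\log\frac{1}{1-\rho}\ge 2\rho$, i.e. $h(\rho):=3(1-\rho)\log(1-\rho)+2\rho\le 0$, a sign analysis of $h$ (one has $h(0)=0$, $h'(\rho)=-3\log(1-\rho)-1$ so $h'(0)=-1<0$, a unique interior minimum at $\rho=1-e^{-1/3}$, and $h(\rho)\to 2$ as $\rho\to 1^-$) identifies the admissible range as $0\le\rho\le\rho_0$, where $\rho_0$ is the unique positive root of $h$.

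For the sharpness I would test the extremal Blaschke factor on the disk: take $g_a(w)=(a-w)/(1-aw)\in\mathcal{B}(\mathbb{D})$ and pull it back through $\Phi(w)=(w-\gamma)/(1-\gamma)$ to obtain $f_a\in\mathcal{B}(\Omega_{\gamma})$, so that the coefficients satisfy $|a_k|/(1-\gamma)^k=(1-a^2)a^{k-1}$ for $k\ge 1$. A direct summation then gives
\beas \mathcal{C}_{f_a}(\rho)=(1+2a)L(\rho)-(1+a)\frac{1}{a\rho}\log\frac{1}{1-a\rho},\eeas
which equals $L(\rho)$ at $a=1$. Differentiating in $a$ at $a=1$ yields exactly $3L(\rho)-2S(\rho)=-h(\rho)/\bigl(\rho(1-\rho)\bigr)$; hence for $\rho>\rho_0$ this derivative is negative, so $\mathcal{C}_{f_a}(\rho)>L(\rho)$ for $a$ slightly below $1$, showing that $\rho_0$ cannot be enlarged.

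I expect the main obstacle to be the sharpness step rather than the upper bound. The upper estimate $g(a)\le L(\rho)$ attains equality only in the degenerate limit $a\to 1$ (a constant function), so one cannot read off sharpness from the bound itself and must instead compute $\mathcal{C}_{f_a}(\rho)$ explicitly and extract the sign of its $a$-derivative at $a=1$. Verifying that this derivative is precisely $3L(\rho)-2S(\rho)$, and therefore changes sign exactly at the root $\rho_0$ of $h$, is the delicate calculation that ties the extremal family to the defining equation. (Conceptually the whole statement is the image of \textrm{Theorem B} under $g=f\circ\Phi$, since $\mathcal{C}_f(\rho)$ is exactly the Ces\'aro majorant of $g\in\mathcal{B}(\mathbb{D})$; the self-contained route above is preferable because it keeps the extremal construction explicit.)
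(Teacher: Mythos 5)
Your proposal is correct and takes essentially the same route as the paper: the same reduction via Lemma \ref{lem2} to the quadratic bound $aL(\rho)+(1-a^2)\bigl(S(\rho)-L(\rho)\bigr)$ (the paper's $\varphi_1(a,\rho)$), the same threshold condition $3L(\rho)\ge 2S(\rho)$ equivalent to $\rho\le\rho_0$, and the same extremal family $\psi\circ\Phi$ analysed to first order at $a=1$, where your derivative $3L-2S$ is precisely the negative of the coefficient $2S-3L$ in the paper's expansion $L+(1-a)(2S-3L)+O\bigl((1-a)^2\bigr)$. The only differences are cosmetic: you argue via the vertex of the downward parabola where the paper uses concavity plus monotonicity of $\partial\varphi_1/\partial a$, and your parenthetical reduction to Theorem B is a valid shortcut the paper leaves implicit.
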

\begin{rem}
Since $\Omega_{\gamma}=\mathbb{D}$ for $\gamma=0$, it follows that \textrm{Theorem A} is a direct consequence of \text{Theorem \ref{Th1}} when $\gamma=0$.\end{rem}
In the following, we derive a sharp Bohr type inequality for the Bernardi integral operator applied to functions belonging to the class $\mathcal{B}(\Omega_{\gamma})$ of bounded analytic functions.
\begin{theo}\label{Th2}
For $0\leq \gamma<1$, let  $f\in\mathcal{B}(\Omega_{\gamma})$ with $f(z)=\sum_{n=0}^\infty a_n\left(z+\frac{\gamma}{1-\gamma}\right)^n$ in $\Omega_{\gamma}$. Then for $\beta>0$, we have 
\beas \sum_{n=0}^\infty \frac{|a_n|}{(n+\beta)(1-\gamma)^n}\rho^n\leq \frac{1}{\beta}\quad\text{for}\quad|\gamma+(1-\gamma)z|=\rho\leq \rho_1,\eeas
where $\rho_1$ is the positive root of the equation $1/\beta - 2\sum_{n=1}^\infty\left( \rho^n/(n+\beta)\right)=0$. The number $\rho_1$ is the best possible.
\end{theo}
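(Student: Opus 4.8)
The plan is to transfer the problem to the unit disk and reduce it to a one-variable estimate in $|a_0|$. First I would invoke the conformal map $\Phi(z)=(z-\gamma)/(1-\gamma)$ from $\mathbb{D}$ onto $\Omega_{\gamma}$ already used in Lemma \ref{lem2}: writing $g=f\circ\Phi\in\mathcal{B}(\mathbb{D})$ with $g(z)=\sum_{n=0}^{\infty}b_nz^n$, one has $b_n=a_n/(1-\gamma)^n$, so that the majorant in question is exactly $\sum_{n=0}^{\infty}|b_n|\rho^n/(n+\beta)$, and Lemma \ref{lem2} supplies the estimate $|b_n|\le 1-|a_0|^2$ for $n\ge 1$. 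This is the only place the geometry of $\Omega_{\gamma}$ enters, and it is precisely what turns the problem into the classical one on $\mathbb{D}$, which explains why the threshold equation $1/\beta=2\sum_{n=1}^{\infty}\rho^n/(n+\beta)$ carries no $\gamma$-dependence.

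Setting $a=|a_0|\in[0,1]$ and $T(\rho)=\sum_{n=1}^{\infty}\rho^n/(n+\beta)$, these bounds give $\sum_{n=0}^{\infty}|b_n|\rho^n/(n+\beta)\le a/\beta+(1-a^2)T(\rho)$. The target inequality $a/\beta+(1-a^2)T(\rho)\le 1/\beta$ is, for $a<1$, equivalent after dividing by $1-a$ to $(1+a)T(\rho)\le 1/\beta$. Since $1+a<2$, while by the very definition of $\rho_1$ one has $2T(\rho)\le 1/\beta$ exactly when $\rho\le\rho_1$, the estimate follows at once; the degenerate case $a=1$ forces $g$ to be a unimodular constant by the maximum principle, so all higher coefficients vanish and equality holds trivially. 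Existence and uniqueness of $\rho_1\in(0,1)$ come from the intermediate value theorem together with the strict monotonicity of $T(\rho)$, since the left-hand side equals $1/\beta>0$ at $\rho=0$ and tends to $-\infty$ as $\rho\to 1^-$.

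For sharpness I would lift the classical Bohr extremizer. Taking $g(z)=(a-z)/(1-az)$ with $a\in(0,1)$ and setting $f=g\circ\Phi^{-1}$, where $\Phi^{-1}(w)=(1-\gamma)w+\gamma$, produces a genuine $f\in\mathcal{B}(\Omega_{\gamma})$ whose coefficients satisfy $|b_0|=a$ and $|b_n|=a^{n-1}(1-a^2)$ for $n\ge 1$. Its majorant equals $a/\beta+(1-a^2)\sum_{n=1}^{\infty}a^{n-1}\rho^n/(n+\beta)$, so the excess over $1/\beta$ has the sign of $(1+a)\sum_{n=1}^{\infty}a^{n-1}\rho^n/(n+\beta)-1/\beta$. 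For any fixed $\rho>\rho_1$ we have $2T(\rho)>1/\beta$; letting $a\to 1^-$ the displayed quantity tends to $2T(\rho)-1/\beta>0$, so by continuity it is strictly positive for $a$ sufficiently close to $1$, violating the bound. Hence $\rho_1$ is best possible.

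Overall the argument is essentially routine once Lemma \ref{lem2} is available; the only points requiring genuine care are checking that the Möbius extremizer lifts to an honest element of $\mathcal{B}(\Omega_{\gamma})$ under $\Phi^{-1}$, and the continuity-plus-monotonicity argument in the limit $a\to 1^-$, which is what actually pins down $\rho_1$ as the threshold rather than merely a sufficient radius.
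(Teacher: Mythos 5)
Your proposal is correct and follows the same strategy as the paper: the identical reduction via Lemma \ref{lem2} to the bound $a/\beta+(1-a^2)\sum_{n\geq1}\rho^n/(n+\beta)$, and the identical extremal function $\psi\circ\Phi$ with $\psi(z)=(a-z)/(1-az)$ and $a\to1^-$ for sharpness. The only (harmless) difference is one of technique: where the paper verifies the one-variable inequality in $a$ by showing $\varphi_2(a,\rho)$ is concave and monotone via partial derivatives, and establishes sharpness through an $O((1-a)^2)$ remainder estimate, you factor out $(1-a)$ in both places and argue algebraically plus by continuity as $a\to1^-$, which is a slightly more elementary rendering of the same steps.
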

In the following, we derive a sharp Bohr type inequality the discrete Fourier transform applied to functions belonging to the class $\mathcal{B}(\Omega_{\gamma})$ of bounded analytic functions.
\begin{theo}\label{Th3}
For $0\leq \gamma<1$, let  $f\in\mathcal{B}(\Omega_{\gamma})$ with $f(z)=\sum_{n=0}^\infty a_n\left(z+\frac{\gamma}{1-\gamma}\right)^n$ in $\Omega_{\gamma}$. Then, we have 
\beas\sum_{n=0}^\infty \left(\sum_{k=0}^n \frac{|a_k|}{(1-\gamma)^n}\right) \rho^n\leq \frac{1}{1-\rho}\quad\text{for}\quad |\gamma+(1-\gamma)z|=\rho\leq \frac{1}{3},\eeas
The number $1/3$ is the best possible.\end{theo}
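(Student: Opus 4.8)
The plan is to transfer the problem to the unit disk, exactly as in the proof of \textrm{Lemma \ref{lem2}}. Writing $\Phi(z)=(z-\gamma)/(1-\gamma)$ and $g=f\circ\Phi\in\mathcal{B}(\mathbb{D})$, the Taylor coefficients of $g$ satisfy $g_k=a_k/(1-\gamma)^k$, so each quantity $|a_k|/(1-\gamma)^k$ entering the inner sums is precisely $|g_k|$. Consequently the left-hand side of the asserted inequality is nothing but the unit-disk Fourier majorant $\mathcal{F}_g(\rho)=\sum_{n=0}^\infty\bigl(\sum_{k=0}^n|g_k|\bigr)\rho^n$ of $g$, evaluated at the radius $\rho=|\gamma+(1-\gamma)z|=|\Phi^{-1}(z)|$. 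At this point \textrm{Theorem F} applied to $g$ already yields both the bound $\mathcal{F}_g(\rho)\le 1/(1-\rho)$ for $\rho\le1/3$ and the optimality of $1/3$; the remaining work is to make the estimate self-contained and to exhibit an extremizer that genuinely lives in $\mathcal{B}(\Omega_{\gamma})$.

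For the bound itself I would argue directly. Put $a=|a_0|=|g(0)|\in[0,1]$. By \textrm{Lemma \ref{lem2}} one has $|a_k|/(1-\gamma)^k\le 1-a^2$ for $k\ge1$, while the $k=0$ term equals $a$, so each inner sum is at most $a+n(1-a^2)$. Summing the resulting geometric and arithmetico-geometric series gives $\mathcal{F}_f(\rho)\le a/(1-\rho)+(1-a^2)\rho/(1-\rho)^2$. Comparing this with $1/(1-\rho)$ and clearing the positive denominator $(1-\rho)^2$ reduces the desired inequality to $(1-a)\bigl[\rho(2+a)-1\bigr]\le0$, that is, to $\rho\le 1/(2+a)$. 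Since $a\in[0,1]$, the most restrictive case is $a=1$, which gives precisely $\rho\le1/3$; this establishes the inequality for every admissible $\gamma$ and every $f$.

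For sharpness I would take the Möbius extremizer $g(z)=(a-z)/(1-az)$ with $a\in(0,1)$ and transport it to $\Omega_{\gamma}$ by setting $f=g\circ\Phi^{-1}$, where $\Phi^{-1}(w)=\gamma+(1-\gamma)w$ maps $\Omega_{\gamma}$ into $\mathbb{D}$; then $f\in\mathcal{B}(\Omega_{\gamma})$ and its coefficients are $a_k=g_k(1-\gamma)^k$. Using $g_0=a$ and $|g_k|=(1-a^2)a^{k-1}$ for $k\ge1$, the inner sums collapse to $1+2a-(1+a)a^n$, whence $\mathcal{F}_f(\rho)=(1+2a)/(1-\rho)-(1+a)/(1-a\rho)$, so that $\mathcal{F}_f(\rho)-1/(1-\rho)=2a/(1-\rho)-(1+a)/(1-a\rho)$. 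A short manipulation shows this difference is positive exactly when $\rho>1/(1+2a)$. Letting $a\to1^-$ drives the threshold $1/(1+2a)$ to $1/3$, so for any $\rho>1/3$ one can choose $a$ close enough to $1$ to violate the bound; hence $1/3$ cannot be enlarged.

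The computations in the first two paragraphs are routine once \textrm{Lemma \ref{lem2}} collapses the data to the single parameter $a$, so the only genuinely delicate point is the sharpness step: one must confirm that the transported function $f$ really belongs to $\mathcal{B}(\Omega_{\gamma})$ and carefully control the sign of $2a/(1-\rho)-(1+a)/(1-a\rho)$ as $a\to1^-$. I expect this to be the main, though modest, obstacle.
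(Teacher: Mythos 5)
Your proof is correct, and its skeleton is the same as the paper's: \textrm{Lemma \ref{lem2}} collapses the majorant to the one-parameter function $\varphi_3(a,\rho)=a/(1-\rho)+(1-a^2)\rho/(1-\rho)^2$, and sharpness is tested on exactly the same extremal function (your $f=g\circ\Phi^{-1}$ is the paper's $f_3=\psi\circ\Phi_3$). Still, your execution genuinely differs in two respects. First, you observe at the outset that $g=f\circ\Phi\in\mathcal{B}(\mathbb{D})$ has Taylor coefficients $g_k=a_k/(1-\gamma)^k$, so the left-hand side is literally the unit-disk majorant $\mathcal{F}_g(\rho)$ and the whole theorem, bound and optimality alike, transfers verbatim from \textrm{Theorem F}; the paper never exploits this and reproves both halves from scratch. (Incidentally, both you and the paper read the denominator in the inner sum as $(1-\gamma)^k$: the printed $(1-\gamma)^n$ is evidently a typo, since the paper's own proof drops the factor at $k=0$ and applies \textrm{Lemma \ref{lem2}} as if the exponent were $k$.) Second, where the paper establishes $\varphi_3(a,\rho)\le 1/(1-\rho)$ by computing $\partial\varphi_3/\partial a$ and $\partial^2\varphi_3/\partial a^2$ and running a monotonicity-in-$a$ argument, you clear denominators and factor, reducing the bound to $(1-a)\left[\rho(2+a)-1\right]\le 0$, i.e.\ $\rho\le 1/(2+a)$; likewise for sharpness you factor the excess as $(1-a)\left[\rho(1+2a)-1\right]/\left[(1-\rho)(1-a\rho)\right]$, positive precisely when $\rho>1/(1+2a)$. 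This buys you something concrete: the paper's sharpness step only shows $G_3(a,\rho)>G_3(a,1/3)=2(1-a)^2/(a-3)$ and remarks that the right-hand side tends to $0$ as $a\to 1^-$, but that quantity is negative, so as literally written this does not yield $G_3(a,\rho)>0$; your explicit threshold closes that small gap, giving a violation for every fixed $\rho>1/3$ as soon as $a>(1/\rho-1)/2$. Your one piece of over-caution is the final paragraph: there is nothing delicate in checking $f\in\mathcal{B}(\Omega_{\gamma})$, since $\Phi^{-1}$ maps $\Omega_{\gamma}$ onto $\mathbb{D}$ and $\psi$ maps $\mathbb{D}$ into itself.
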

\begin{rem} Since for $\gamma=0$, the domain $\Omega_{\gamma}$ reduces to the unit disk $\mathbb{D}$, it follows that \textrm{Theorem F} is a direct consequence of \text{Theorem \ref{Th3}} when $\gamma=0$.\end{rem}
\section{proofs of the main results}
\begin{proof}[{\bf Proof of Theorem \ref{Th1}}]
Let $f(z)$ be analytic on $\Omega_{\gamma}$ with $|f(z)|\leq 1$. In view of \textrm{Lemma \ref{lem2}}, we have $|a_n|\leq (1-\gamma)^n (1-|a_0|^2)$ for $n\geq 1$. Then
\beas\mathcal{C}_f(\rho)&=&\sum_{n=0}^\infty \frac{1}{n+1}\left(\sum_{k=0}^n \frac{|a_k|}{(1-\gamma)^k}\right) \rho^n\\[2mm]
&=& |a_0|\left(1+\frac{\rho}{2}+\frac{\rho^2}{3}+\cdots\right)+\sum_{n=0}^\infty \frac{1}{n+1}\left(\sum_{k=1}^n \frac{|a_k|}{(1-\gamma)^k}\right) \rho^n\\[2mm]
&=&-\frac{|a_0|}{\rho}\log(1-\rho)+\sum_{n=0}^\infty \frac{1}{n+1}\left(\sum_{k=1}^n \frac{|a_k|}{(1-\gamma)^k}\right) \rho^n.\eeas
Let $|a_0|=a\in[0,1]$. In view of \textrm{Lemma \ref{lem2}}, we have
\beas \mathcal{C}_f(\rho)&\leq& -\frac{a}{\rho}\log(1-\rho)+\sum_{n=0}^\infty \frac{1}{n+1}\left(\sum_{k=1}^n\left(1-a^2\right)\right) \rho^n\\[2mm]
&=&-\frac{a}{\rho}\log(1-\rho)+\left(1-a^2\right)\sum_{n=0}^\infty \frac{n}{n+1}\rho^n\\[2mm]
&=&-\frac{a}{\rho}\log(1-\rho)+\left(1-a^2\right) \left(\frac{1}{1-\rho}+\frac{1}{\rho}\log(1-\rho)\right)\\[2mm]&=&\varphi_1(a,\rho).\eeas
Differentiating partially $\varphi_1(a,\rho)$ with respect to $a$ twice, we have   
\beas&&\frac{\pa }{\pa a}\varphi_1(a,\rho)=-\frac{\log(1-\rho)}{\rho} - 2 a\left(\frac{1}{1-\rho}+\frac{1}{\rho}\log(1-\rho)\right)\\[2mm]\text{and}
&&\frac{\pa^2 }{\pa a^2}\varphi_1(a,\rho)=-2 \left(\frac{1}{1-\rho}+\frac{1}{\rho}\log(1-\rho)\right)\leq 0\quad\text{for}\quad\rho\in[0,1)\eeas
and it's shown in Figure \ref{fig2}.
\begin{figure}[H]
\centering
\includegraphics[scale=0.6]{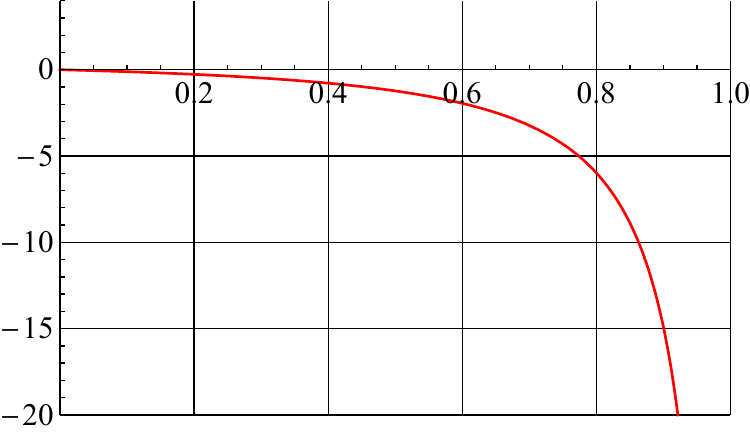}
\caption{The graph of $-2 \left(1/(1-\rho)+\log(1-\rho)/\rho\right)$ for $\rho\in[0,1)$}
\label{fig2}
\end{figure}
Therefore, $\frac{\pa }{\pa a} \varphi_1(a,\rho)$ is a monotonically decreasing function of $a\in[0,1]$ and it follows that 
\beas\frac{\pa}{\pa a}\varphi_1(a,\rho)\geq \frac{\pa }{\pa a}\varphi_1(1,\rho)=\frac{-2\rho- 3(1-\rho)\log(1-\rho)}{(1-\rho)\rho}\geq 0\quad\text{for}\quad \rho\leq \rho_0,\eeas
where $\rho_0(\approx 0.533589)$ is the positive root of the equation $3(1-\rho)\log(1-\rho)+2\rho=0$, as illustrated in Figure \ref{fig3}.
\begin{figure}[H]
\centering
\includegraphics[scale=0.7]{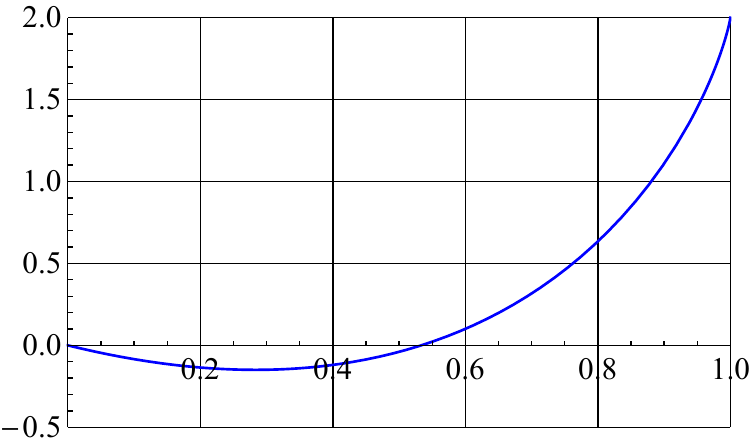}
\caption{The graph of $3(1-\rho)\log(1-\rho)+2\rho$ for $\rho\in[0,1)$}
\label{fig3}
\end{figure}
Therefore $\varphi_1(a,\rho)$ is a monotonically increasing function of $a\in[0,1]$ and it follows that 
\beas \varphi_1(a,\rho)\leq \varphi_1(1,\rho)=-\frac{1}{\rho}\log(1-\rho)=\frac{1}{\rho}\log\frac{1}{(1-\rho)}\quad\text{for}\quad\rho\leq \rho_0.\eeas\\[2mm]
\indent To prove the sharpness of the result, we consider the function $f_1(z)$ in $\Omega_{\gamma}$ such that $f_1=\psi\circ\Phi_1$, where $\Phi_1 : \Omega_{\gamma}\to\mathbb{D}$ defined by $\Phi_1(z)=\gamma+(1-\gamma)z$ and $\psi : \mathbb{D}\to\mathbb{D}$ defined by $\psi(z)=(a-z)/(1-az)$ for $a\in(0,1)$ and $\gamma\in[0,1)$. Thus, we have
\bea\label{b3} f_1(z)
=\frac{a-(1-\gamma)\left(z+\frac{\gamma}{1-\gamma}\right)}{1-a(1-\gamma)\left(z+\frac{\gamma}{1-\gamma}\right)}
=A_0-\sum_{n=1}^\infty A_n \left(z+\frac{\gamma}{1-\gamma}\right)^n\;\text{for}\;z\in\Omega_{\gamma},\eea	
where $A_0=a$ and $A_n=a^{n-1}(1-a^2)(1-\gamma)^n$ for $n\geq 1$.
The Ces\'aro operator on $f_1$ shows that 
\beas \mathcal{C}_{f_1}(r)&=&\sum_{n=0}^\infty \frac{1}{n+1}\left(\sum_{k=0}^n \frac{|A_k|}{(1-\gamma)^k}\right) \rho^n\\[2mm]
&=&-\frac{a}{\rho}\log(1-\rho)+\sum_{n=0}^\infty \frac{1}{n+1}\left(\sum_{k=1}^n a^{k-1}(1-a^2)\right) \rho^n\\[2mm]
&=&-\frac{1}{\rho}\log(1-\rho)-\frac{2a}{\rho}\log(1-\rho)+\frac{(1+a)}{a\rho}\log(1-a\rho).\eeas
As $2/(1-\rho)+3\log(1 - \rho)/\rho>0$ for $\rho>\rho_0$, we can write $\mathcal{C}_{f_1}(\rho)$ as 
\bea\label{b1}
\mathcal{C}_{f_1}(\rho)&=&-\frac{1}{\rho}\log(1-\rho)+(1-a)\left(\frac{2}{1-\rho}+\frac{3}{\rho}\log(1 - \rho)\right)-(3-a)\frac{1}{\rho}\log(1-\rho)\nonumber\\[2mm]
&&-\frac{2(1-a)}{1-\rho}+\frac{(1+a)}{a\rho}\log(1-a\rho)\nonumber\\[2mm]
&=&-\frac{1}{\rho}\log(1-\rho)+(1-a)\left(\frac{2}{1-\rho}+\frac{3}{\rho}\log(1 - \rho)\right)+G_1(a,\rho),\eea
where 
\beas G_1(a,\rho)=-(3-a)\frac{1}{\rho}\log(1-\rho)-\frac{2(1-a)}{1-\rho}+\frac{(1+a)}{a\rho}\log(1-a\rho).\eeas
In order to express $G_1(a,\rho)$ in its summation form, we have
\bea\label{b2}G_1(a,\rho)&=&(3-a)\sum_{n=1}^\infty \frac{\rho^{n-1}}{n}-2(1-a)\sum_{n=1}^\infty \rho^{n-1}-(1+a)\sum_{n=1}^\infty \frac{(a\rho)^{n-1}}{n}\nonumber\\[2mm]
&=&\sum_{n=1}^\infty \left(\frac{3-a}{n}-2(1-a)-(1+a)\frac{a^{n-1}}{n}\right)\rho^{n-1}\nonumber\\[2mm]
&=&O\left((1-a)^2\right)\quad\text{as}\quad a\to1^-.\eea
From (\ref{b1}) and (\ref{b2}), we conclude that $\rho_0$ cannot be improved. This completes the proof.
\end{proof}
\begin{proof}[{\bf Proof of Theorem \ref{Th2}}]
Let $f(z)$ be analytic on $\Omega_{\gamma}$ with $|f(z)|\leq 1$. Let $|a_0|=a\in[0,1]$. In view of \textrm{Lemma \ref{lem2}}, we have
\beas \sum_{n=0}^\infty \frac{|a_n|}{(n+\beta)(1-\gamma)^n}\rho^n&\leq& \frac{a}{\beta}+\sum_{n=1}^\infty \frac{(1-a^2)}{(n+\beta)}\rho^n\\[2mm]
&=& \frac{a}{\beta}+(1-a^2)\sum_{n=1}^\infty \frac{\rho^n}{(n+\beta)}\\[2mm]&=&\varphi_2(a,\rho).\eeas
Differentiating partially $\varphi_2(a,\rho)$ with respect to $a$ twice, we have   
\beas\frac{\pa }{\pa a}\varphi_2(a,\rho)=\frac{1}{\beta} - 2 a\sum_{n=1}^\infty \frac{\rho^n}{(n+\beta)}\quad\text{and}\quad\frac{\pa^2 }{\pa a^2}\varphi_2(a,\rho)=-2 \sum_{n=1}^\infty \frac{\rho^n}{(n+\beta)}\leq 0\eeas
for $\rho\in[0,1)$.
Therefore, $\frac{\pa}{\pa a} \varphi_2(a,\rho)$ is a monotonically decreasing function of $a$ and this gives that 
\beas \frac{\pa}{\pa a}\varphi_2(a,\rho)\geq \frac{\pa}{\pa a}\varphi_2(1,\rho)=\frac{1}{\beta} - 2\sum_{n=1}^\infty \frac{\rho^n}{(n+\beta)}\geq 0\quad\text{for}\quad\rho\leq \rho_1,\eeas
where $\rho_1$ is the positive root of the equation 
\beas \frac{1}{\beta} - 2\sum_{n=1}^\infty \frac{\rho^n}{(n+\beta)}=0.\eeas
Thus, for all $a\in[0,1]$, we have 
\beas \sum_{n=0}^\infty \frac{|a_n|}{(n+\beta)(1-\gamma)^n}\rho^n\leq \varphi_2(a,\rho)\leq \varphi_2(1,\rho)=\frac{1}{\beta}\quad\text{for}\quad\rho\leq \rho_1.\eeas\\[2mm]
\indent To prove the sharpness of the result, we consider the function $f_2(z)$ in $\Omega_{\gamma}$ such that $f_2=\psi\circ\Phi_2$, where $\Phi_2 : \Omega_{\gamma}\to\mathbb{D}$ defined by $\Phi_2(z)=\gamma+(1-\gamma)z$ and $\psi : \mathbb{D}\to\mathbb{D}$ defined by $\psi(z)=(a-z)/(1-az)$ for $a\in(0,1)$ and $\gamma\in[0,1)$.
Therefore,
\beas f_2(z)=A_0-\sum_{n=1}^\infty A_n \left(z+\frac{\gamma}{1-\gamma}\right)^n\quad\text{for}\quad a\in(0,1), \gamma\in[0,1)\quad\text{and}\quad z\in\Omega_{\gamma},\eeas	
where $A_0=a$ and $A_n=a^{n-1}(1-a^2)(1-\gamma)^n$ for $n\geq 1$. Thus,
\bea\label{b4}\sum_{n=0}^\infty \frac{|A_n|}{(n+\beta)(1-\gamma)^n}\rho^n&=&\frac{a}{\beta}+\sum_{n=1}^\infty \frac{|A_n|}{(n+\beta)(1-\gamma)^n}\rho^n\nonumber\\[2mm]
&=&\frac{1}{\beta}+(1-a)\left(2\sum_{n=1}^\infty \frac{\rho^n}{(n+\beta)}-\frac{1}{\beta} \right)+G_2(a,\rho),\eea
where
\beas G_2(a,\rho)&=&-2(1-a)\sum_{n=1}^\infty \frac{\rho^n}{(n+\beta)}+(1-a^2)\sum_{n=1}^\infty \frac{a^{n-1}}{(n+\beta)}\rho^n\\[2mm]
&=&\sum_{n=1}^\infty\frac{(1-a^2)a^{n-1}-2(1-a)}{(n+\beta)}\rho^n=O((1-a)^2)\quad\text{as}\quad a\to 1^-.\eeas 
Further, the quantity 
\beas 2\sum_{n=1}^\infty \frac{\rho^n}{(n+\beta)}-\frac{1}{\beta}>0\quad\text{for}\quad\rho>\rho_1.\eeas
From (\ref{b4}), we conclude that $\rho_1$ cannot be improved. This completes the proof.
\end{proof}
\begin{proof}[{\bf Proof of Theorem \ref{Th3}}]
Let $f(z)$ be analytic on $\Omega_{\gamma}$ with $|f(z)|\leq 1$. Let $|a_0|=a\in[0,1]$. In view of \textrm{Lemma \ref{lem2}}, we have 
\beas \sum_{n=0}^\infty \left(\sum_{k=0}^n \frac{|a_k|}{(1-\gamma)^n}\right) \rho^n&\leq& \sum_{n=0}^\infty a\rho^n+ \sum_{n=0}^\infty \left(\sum_{k=1}^n \frac{|a_k|}{(1-\gamma)^n}\right) \rho^n\\[2mm]
&\leq& \frac{a}{1-\rho}+(1-a^2)\sum_{n=0}^\infty n\rho^n\\
&=&\frac{a}{1-\rho}+(1-a^2)\frac{\rho}{(1-\rho)^2}\\&=&\varphi_3(a,\rho).\eeas
Differentiating partially $\varphi_3(a,\rho)$ with respect to $a$ twice, we have   
\beas\frac{\pa }{\pa a}\varphi_3(a,\rho)=\frac{1}{1-\rho} - \frac{2 a\rho}{(1-\rho)^2}\quad\text{and}\quad\frac{\pa^2 }{\pa a^2}\varphi_3(a,\rho)=-\frac{2\rho}{(1-\rho)^2}\leq 0\quad\text{for}\quad \rho\in[0,1).\eeas
Therefore, $\frac{\pa }{\pa a}\varphi_3(a,\rho)$ is a monotonically decreasing function of $a$ and this gives that 
\beas \frac{\pa}{\pa a}\varphi_3(a,\rho)\geq \frac{\pa}{\pa a}\varphi_3(1,\rho)=\frac{1}{1-\rho} - \frac{2\rho}{(1-\rho)^2}=\frac{1-3\rho}{(1-\rho)^2}\geq 0\quad\text{for} \quad\rho\leq 1/3.\eeas
Therefore, $\varphi_3(a,\rho)$ is a monotonically increasing function of $a\in[0,1]$ and it follows that 
\beas \varphi_3(a,\rho)\leq \varphi_3(1,\rho)=\frac{1}{1-\rho}\quad\text{for}\quad\rho\leq \frac{1}{3}.\eeas
\indent To prove the sharpness of the result, we consider the function $f_3(z)$ in $\Omega_{\gamma}$ such that $f_3=\psi\circ\Phi_3$, where $\Phi_3 : \Omega_{\gamma}\to\mathbb{D}$ defined by $\Phi_3(z)=\gamma+(1-\gamma)z$ and $\psi : \mathbb{D}\to\mathbb{D}$ defined by $\psi(z)=(a-z)/(1-az)$ for $a\in(0,1)$ and $\gamma\in[0,1)$. Therefore,
\beas f_3(z)
=A_0-\sum_{n=1}^\infty A_n \left(z+\frac{\gamma}{1-\gamma}\right)^n\quad\text{for}\quad a\in(0,1),\quad \gamma\in[0,1)\quad\text{and}\quad z\in\Omega_{\gamma},\eeas	
where $A_0=a$ and $A_n=a^{n-1}(1-a^2)(1-\gamma)^n$ for $n\geq 1$. Thus, we have
\beas\sum_{n=0}^\infty \left(\sum_{k=0}^n \frac{|A_k|}{(1-\gamma)^n}\right) \rho^n&=&\sum_{n=0}^\infty a\rho^n+\sum_{n=0}^\infty \left(\sum_{k=1}^n \frac{|A_k|}{(1-\gamma)^n}\right)\rho^n\\[2mm]
&=&\frac{a}{1-\rho}+(1-a^2)\sum_{n=0}^\infty \left(\sum_{k=1}^n a^{k-1}\right)\rho^n\\[2mm]
&=&\frac{a}{1-\rho}+(1-a^2) \sum_{n=0}^\infty \frac{(1-a^n)}{(1-a)}\rho^n\\[2mm]
&=&\frac{1}{1-\rho}+\frac{G_3(a,\rho)}{1-\rho},\eeas
where $G_3(a,\rho)=2a-(1+a)(1-\rho)/(1-a\rho)$.
Differentiating partially $G_3(a,\rho)$ with respect to $\rho$, we have
\beas\frac{\pa}{\pa \rho}G_3(a,\rho)=\frac{1-a^2}{(1-a\rho)^2}>0.\eeas
Therefore, $G_3(a,\rho)$ is a strictly increasing function of $\rho\in(0,1)$. Therefore, for $\rho>1/3$, we have 
\beas G_3(a,\rho)>G_3(a,1/3)=\frac{2(1-a)^2}{a-3}\quad\text{which tends to $0$ as}\quad a\to1^{-}.\eeas
Hence, $1/(1-\rho)+G_3(a,\rho)/(1-\rho)>1/(1-\rho)$ for $\rho>1/3$. This shows that $1/3$ is the best possible. This completes the proof.
\end{proof}
\section{ Statements}
\noindent{\bf Acknowledgment:} The work of the second author is supported by University Grants Commission (IN) fellowship (No. F. 44 - 1/2018 (SA - III)). The authors would like to thank the anonymous reviewers and the editing team for their valuable suggestions towards the improvement of the paper.\\[2mm]
{\bf Conflict of Interest:} The authors declare that there are no conflicts of interest regarding the publication of this
paper.\\[2mm]
{\bf Availability of data and materials:} Data sharing not applicable to this article as no data sets were generated or analysed during
the current study.

\end{document}